\newtheorem{proposition}{Proposition}[section]
\newtheorem{theorem}[proposition]{Theorem}
\newtheorem{lemma}[proposition]{Lemma}
\newtheorem*{bootstrap*}{Bootstrap Step}
\theoremstyle{definition}
\newtheorem{definition}[proposition]{Definition}
\newtheorem{remark}[proposition]{Remark}
\numberwithin{equation}{section}
\newtheorem{assumption}[proposition]{Assumption}
\definecolor{darklavender}{rgb}{0.56, 0.0, 1.0}
\definecolor{green}{rgb}{0.0, 0.5, 0.0}
\definecolor{blue}{rgb}{0.0, 0.0, 0.55}
\newcommand\eps{\varepsilon}
\def\Re{{\rm Re}}
\newcommand\de{{\partial}}
\newcommand\RR {{\mathbb R}}
\newcommand{\CC}{\mathbb{C}}
\newcommand{\QQ}{\mathbb{Q}}
\def\L{\lambda} 
\def\R{\mathbb{R}} 
\def\k{\textbf{k}} 
\def\W{\mathcal{W}} 
\def\Wbl{\mathcal{W}_{\rm{BL}}}
\def\G{\chi}
\newcommand{\bcb}[1]{#1}
\newcommand{\be}{\begin{equation}}
\newcommand{\ee}{\end{equation}}
\newcommand{{\kk}}{|\k|}
\newcommand{{\kko}}{|\k^0|}
\patchcmd{\subsubsection}{\itshape}{\itshape\bfseries}{}{} 
\title[]{Linear boundary layer analysis \\of the near-critical reflection of internal gravity waves \\with different sizes of viscosity and diffusivity}
\author[R.\ Bianchini]{Roberta Bianchini}
\address{Consiglio Nazionale delle Ricerche, Istituto per le Applicazioni del Calcolo, 00185 Rome, Italy}
\email{roberta.bianchini@cnr.it}
\author[G.\ Orr\`u]{Gianluca Orr\`u}
\address{Sapienza University of Rome, Department of Mathematics}
\email{orru.1602245@studenti.uniroma1.it}
\keywords{}
\begin{document}
\maketitle

\begin{abstract} 
The aim of this work is to make a further step towards the understanding of the near-critical reflection of internal gravity waves from a slope in the more general and realistic context where the size of viscosity $\nu$ and the size of diffusivity $\kappa$ are different. 
In particular, we provide a systematic characterization of boundary layers (boundary layer wave packets) decays and sizes depending on the order of magnitude of viscosity and diffusivity.
We can construct an $L^2$ stable approximate solution to the linear near-critical reflection problem under the scaling assumption of Dauxois \& Young JFM 1999, where either viscosity of diffusivity satisfies a precise scaling law in terms of the criticality parameter.
\end{abstract}

\tableofcontents

\section{Introduction}
This work provides new results on the analysis of the \emph{linear} near-critical reflection problem for the two-dimensional Boussinesq system 
\begin{align}
\de_t u - b \sin \gamma + \de_x p &= \nu  \Delta u,\notag\\
\de_t w - b \cos \gamma + \de_y p &=  \nu \Delta w,\notag\\
\de_t b + N^2(u\sin \gamma  + w \cos \gamma) & = \kappa  \Delta b,\notag\\
\de_x u + \de_y w&=0,
\label{eq:system}
\end{align}
in the half space $\mathbb{R}^2_+$, with viscosity $\nu>0$ and diffusivity $\kappa>0$, while $N^2$ is a positive constant which plays the role of the \emph{buoyancy frequency} (further details later on). The system is endowed with \emph{no-slip} boundary conditions for the velocity field and \emph{no-flux} boundary condition for the buoyancy term, i.e.
\begin{align}
u_{|y=0}=w_{|y=0}=\de_y b_{|y=0}=0.\label{eq:cond-BL}
\end{align}
The linear inviscid approximation of the Boussinesq system (i.e. \eqref{eq:system} with $\nu=\kappa=0$) supports the propagation of waves, which are called \emph{internal gravity waves}. This can be seen by analo\-gy with geometric optics for acoustic waves (see for instance \cite{Metivier}): taking the Fourier transform of \eqref{eq:system} with $\nu=\kappa=0$, one can define the solution $(u, w, b) \in \R^3$ as a sum of plane waves $(\widehat u, 
\widehat w,
\widehat b
)^T e^{-i\omega_{k,m} t + i kx+imy}$, where $(k,m) \in \R^2$ are the Fourier variables, whose \emph{dispersion relation} and Fourier coefficients are respectively given by 
\begin{align}\label{eq:disprel}
\omega_{k,m}^2 = N^2\frac{(k \cos \gamma-m\sin \gamma)^2}{k^2+m^2}=(N \sin \theta)^2, \qquad 
X_{k,m}:=\begin{pmatrix}\widehat u\\
\widehat w\\
\widehat b\\
\end{pmatrix} = \begin{pmatrix}
1\\
-\frac km\\
N\frac{i(k \cos \gamma - m \sin \gamma)}{m \omega_{k,m}}
\end{pmatrix},
\end{align}
with $(\sqrt{k^2+m^2}, \frac \pi 2-(\theta+\gamma))$ being the polar coordinates of the wavenumber $(k,m)$ in Fourier space, in the reference system $(x,y)$.

The physical phenomenon which is analyzed here is the reflection of those internal waves from a sloping flat boundary of an arbitrary but fixed angle $\gamma$. As widely discussed in \cite{BDSR19}, since the notion of \emph{propagation} does not make sense for a single plane wave, we work with \emph{wave packets}. Our typical wave packet is a simple linear superposition of plane waves, as defined in \cite{BDSR19}.
More precisely, the definition of the wave packet hitting the boundary $y=0$ (incident wave packet) reads as follows
\begin{align}\label{def:inc}
\W_{\rm{inc}}(x,y):= \int_{\R^2} \widehat{A}(k,m) X_{k,m} e^{-i\omega_{k,m} t+ikx+imy} \, dk \, dm,
\end{align}
where
\begin{enumerate}
\item the eigenvector $X_{k,m} \in \R^3$ is given by \eqref{eq:disprel}, where the pressure can be recovered for instance using the first equation of \eqref{eq:system} (with $\nu=0$), which in Fourier reads
$$-i\omega_{k,m}\widehat u - \sin \gamma \widehat b + i k \widehat p =0 \quad \leftrightarrow \quad \widehat{p}=\frac 1k [ \omega_{k,m}+\sin \gamma \frac{(k\cos \gamma-m\sin \gamma)}{m\omega_{k,m}}],$$
\item the time frequency 
\begin{align*}
\omega_{k,m} = \omega_{k,m}^\pm \text{\, if \,} \nabla_{k,m} \omega^\pm_{k,m} (k,m) \cdot (0,1)^{\rm{T}} < 0, \text{\, with \,}
\omega_{k,m}^\pm=\pm\frac{k \cos \gamma-m \sin \gamma}{\sqrt{k^2+m^2}}, 
\end{align*}
\item the function 
\begin{align}\label{def:psi}
\widehat{A}(k,m):= \frac{1}{\eps^2}\sum_{\pm} \G (\eps^{-2}(k\pm k_0))  \G (\eps^{-2}(m\pm m_0)),
\end{align}
for any $C^\infty$ compactly supported function $\G$. 
\end{enumerate}
The choice of item (2) in the above list is motivated by the fact that we consider the propagation of the incident wave in \eqref{def:inc} in the upper half plane $y \ge 0$. Then, the \emph{incident} wave hits the boundary provided that it propagates downwards. This is in fact the reason why we impose that $\nabla_{k,m} \omega^\pm_{k,m} (k,m) \cdot (0,1)^{\rm{T}} < 0$. 
The criticality of the near-critical reflection phenomenon is measured in terms of the difference between the angle of the incident internal wave ($\theta$ with $\omega_{k,m}^2=(N\sin \theta)^2)$ and the angle $\gamma$ of the slope. The reflection of internal waves is called \emph{near-critical} (see \cite{DY1999}) if the difference (hereafter called \emph{criticality parameter})
\begin{align}\label{def:critical-relation}
\zeta:=\omega_{k,m}^2 - \sin^2 \gamma
\end{align}
is a small parameter. We briefly explain the meaning of this critical setting.
First, system \eqref{eq:system} is the rotated version (of angle $\gamma$) of the linear non-rotated 2d Boussinesq system in the original cartesian coordinates $(x_1, x_2)$, which reads
\begin{align}\label{eq:Boussinesq-nonrot}
\de_t \tilde u + \de_{x_1} \tilde p & = \nu \Delta \tilde u,\notag\\
\de_t \tilde w - \tilde b + \de_{x_2} \tilde p &= \nu \Delta \tilde w,\notag \\
\de_t \tilde b + N^2 \tilde w &= \kappa \Delta \tilde b,\notag \\
\de_{x_1} \tilde u + \de_{x_2} \tilde{w}&=0,
\end{align} 
in the upper-slope region of angle $\gamma$ determined by $\de_\Omega= \{ (x_1, x_2) \, | \, x_2-(\tan \gamma) x_1=0\}$, i.e. the domain where system \eqref{eq:Boussinesq-nonrot} is considered is the upper region identified by a slope of inclination $\gamma$ with respect to the horizontal $x_2=0$. 
The related boundary conditions are given by $\tilde u|_{\de_\Omega}=\tilde w|_{\de_\Omega}=(\nabla \cdot \overrightarrow{n}) b|_{\de_\Omega}=0$, where $\overrightarrow{n}=(\sin \gamma, -\cos \gamma)$ is the unit vector normal to the boundary $\de_\Omega$, see Figure \ref{fig:slope-coordinates}. 
\begin{figure}[h!]\label{fig:slope-coordinates}
\includegraphics[scale=0.2]{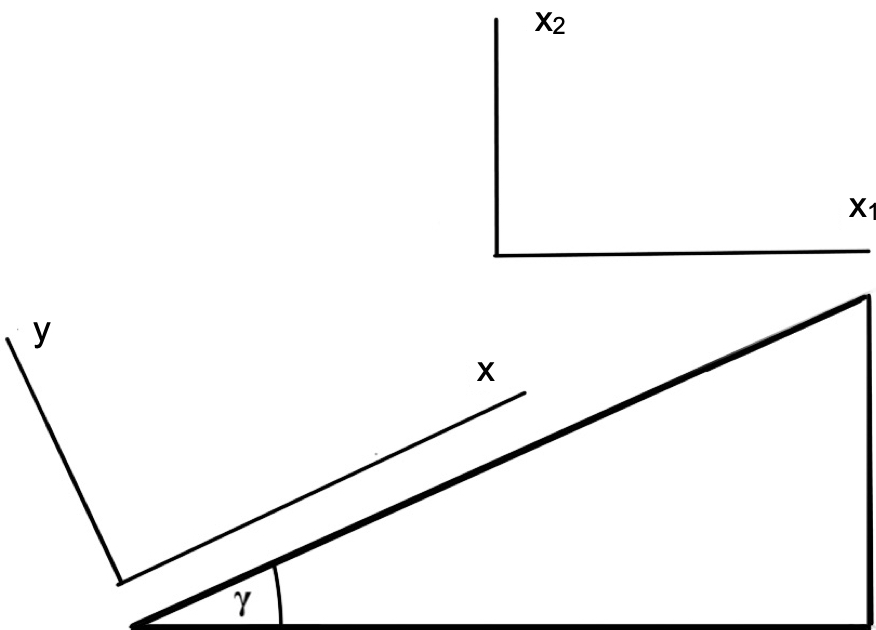}
\caption{Cartesian coordinates $(x_1, x_2)$ and rotated coordinates $(x,y)$.}
\end{figure}

Next, performing a plane wave (Fourier) analysis of the non-rotated system \eqref{eq:Boussinesq-nonrot} (as previously sketched for the rotated system \eqref{eq:system}), one obtains that the non-rotated dispersion relation is
\begin{align}\label{eq:omega-original}
\omega^2_{k_1, k_2} = N^2\frac{k_1^2}{k_1^2+k_2^2}=N^2 \mathcal{R}_1^2=N^2\sin^2\theta,
\end{align}
where $(k_1, k_2)$ are the non-rotated frequency variables in the original reference system $(x_1, x_2)$, $\mathcal{R}_1$ is the symbol of the first component of the Riesz transform, and $\theta$ is the angle between the frequency vector $(k_1, k_2)$ and the vertical axis $x_1=0$ (in other words, $(\kk, \frac \pi 2-\theta)$ are the polar coordinates of $(k_1, k_2)$). 
\begin{figure}[h!]
\includegraphics[scale=0.4]{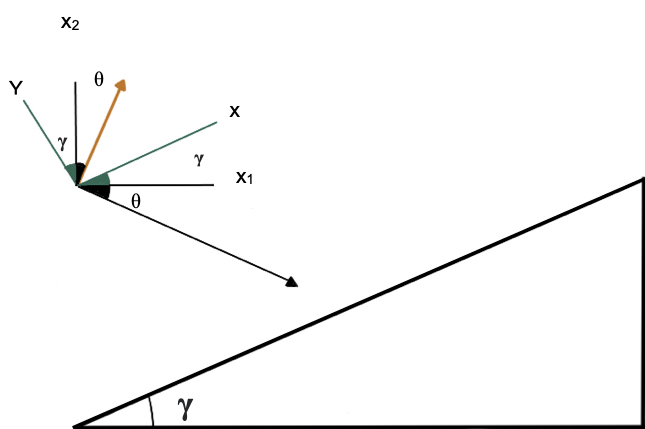}
\caption{Incident wave of frequency $\omega^2_{k,m}=(\sin \theta)^2$ hitting the slope of angle $\gamma$. The orange arrow is the wavenumber of components $(k_1, k_1)$ in $(x_1, x_2)$. The downward black arrow is the incident wave.}
\label{fig:slope2}
\end{figure}
This way,  the angle $\theta$ identifies the direction of the frequency vector (wavenumber) of components $(k_1, k_2)$ in the reference system $(x_1, x_2)$ (and of components $(k,m)$ in the rotated reference system $(x,y)$) of each superimposed plane wave $e^{-i\omega_{k,m} t +i kx + imy}$ inside the integral (the wavepacket) \eqref{def:inc}.

The reflection of plane waves is near-critical when $\sin^2\theta$ and $\sin^2\gamma$ are very close, where $\gamma$ is the slope inclination, see {Figure \ref{fig:slope2}}. The reason why this situation where $\zeta=\omega_{k,m}^2-\sin^2\gamma \sim 0$ is critical is due the anisotropic nature of the dispersion relation $\omega_{k,m}^2=N^2 \sin^2\theta$ of internal gravity waves. In two space dimensions, if $\omega_{k,m}^2 \in \R$ is fixed, then there are four frequency vectors $ (\pm k_1, \pm k_2)$ fulfilling the dispersion relation \eqref{eq:omega-original}. This implies that a given fixed frequency $\omega_{k,m}^2$ is responsible for the ''propagation'' of four internal waves in four specific directions, which are orthogonal to the four frequency vectors $ (\pm k_1, \pm k_2)$. When an incident internal gravity wave hits the slope, the anisotropic nature of this peculiar dispersion relation generates a reflected wave propagating in the vicinity of the slope if the inclination $\theta$ of the frequency vector and the angle of the slope $\gamma$ are very close. In particular, the reflected wave almost lying along the slope has ''infinite'' amplitude (i.e. the amplitude is a negative power of the small criticality parameter $\zeta$ in \eqref{def:critical-relation}). This phenomenon causes therefore an energy focusing along the boundary when the reflection is near-critical, i.e. $\zeta \sim 0$. In the presence of (small) viscosity and diffusivity as in \eqref{eq:Boussinesq-nonrot} with $\nu>0, \kappa>0$, the reflected wave is replaced by boundary layers (of high amplitude), see  \cite{DY1999, BDSR19}.
The physical literature on the near-critical reflection of internal waves is quite vast: the first investigation goes back to 1999 and it is due to Dauxois \& Young \cite{DY1999} (earlier physical experiments were realized by the group of Leo Mass, \cite{Maas}), while this is still an active direction of research in physics, see for instance \cite{kakaota}. In the Dauxois \& Young physical setting, see \cite{DY1999}, the considered scaling relation is the following. For an arbitrarily small parameter $\eps>0$, let us adimensionalize and set (either) the viscosity $\nu$ (or the diffusivity $\kappa$) as 
\begin{align}\label{eq:scaling-visc}
\nu=\nu_0 \eps^6 \qquad (\text{or} \quad \kappa=\kappa_0 \eps^6).
\end{align}
Then the criticality parameter in terms of the size of the viscosity reads
\begin{align}\label{eq:zeta}
\zeta \sim \eps^{2}.
\end{align}
An additional assumption is crucially adopted in \cite{DY1999} (and consequently in \cite{BDSR19}): both those studies rely on the scaling assumption $\nu \sim \kappa \sim \eps^6$, i.e. \emph{viscosity and diffusivity have the same size}, which is given in terms of the size of the criticality parameter $\zeta \sim \eps^2$. However, \emph{the order of magnitude of viscosity and diffusivity is very different in several physical scenarios}, for instance in the case of deep oceanic waters (see \cite{Dauxois-new}). 
The aim of this work is to make a further step towards the understanding of the near-critical reflection of internal gravity waves from a slope in the more general and realistic context where the size of viscosity $\nu$ and the size of diffusivity $\kappa$ are different. In particular, $\nu>0$ and $\kappa>0$ will be both small, according to the physical predictions \cite{DY1999}, but their order of smallness (in terms of the small parameter $\eps>0$, which measures the criticality of the problem in \eqref{eq:zeta}) is a priori different. Note that the smallness of $\nu, \kappa$ coupled with the presence of a bounda\-ry (and related boundary conditions to be satisfied) is responsible for \emph{the appearance of boundary layers}. It is well-known (see for instance \cite{Metivier, Chemin2006}) that the number and the nature of boundary layers is deeply sensitive to variations of the order of magnitude of viscosity $\nu$ and diffusivity $\kappa$. Our analy\-sis provides a systematic solution to the linear problem \eqref{eq:system} in $\R^2_+$ with boundary conditions \eqref{eq:cond-BL}, under the scaling assumption \eqref{eq:scaling-visc} of \cite{DY1999}  on  the viscosity $\nu>0$ (resp. the diffusivity $\kappa>0$), but without any scaling assumption on the diffusivity coefficient $\kappa>0$ (resp. the viscosity coefficient $\nu>0$). In order to investigate all the possible order of magnitudes (in $\eps$) attained by $\kappa$ (resp. $\nu$), we divide the study in five different cases, which are described later on.
The first four cases that we treat fullfill the scaling assumption \eqref{eq:scaling-visc}. The outcome of our study is that \emph{when the scaling assumption \eqref{eq:scaling-visc} holds, we can always 
 provide a \emph{consistent} and $L^2$ \emph{stable} approximate solution to the linear problem}. Besides, we provide a systematic characterization of boundary layers (boundary layer wave packets) decays and sizes in $L^2(\R^2_+)$ and in $L^\infty (\R^2_+)$.  We further consider the scaling relation where viscosity $\nu$ and diffusivity $\kappa$ have the same size $\nu \sim \kappa \sim \eps^\beta$ with $\beta > 6$, where the case $\beta=6$ was investigated in \cite{BDSR19}. This is outside from the scaling assumption \eqref{eq:scaling-visc}. In this last case, we can provide an approximate solution which is only consistent.
In fact, in this last case we are not able to lift all the three boundary conditions \eqref{eq:cond-BL} because of a \emph{degenerate boundary layer} that we cannot use in our solution. This way, the error that our approximate solution generates on the boundary runs out the possibility of having stability in $L^2$ (because of the presence of the Laplacian, which is incompatibile with any small error on the boundary when one deals with steep function as boundary layers). Note that in this last case $\nu$ and $\kappa$ are allowed to be as small as any (positive integer) power of $\eps$. Therefore this case approaches in some sense the inviscid near-critical reflection problem, which does not have any hope to be stable in $L^2$ (see \cite{BDSR19} for an explanation). \\\\
In the rest of the introduction, we provide a very brief (and very far from being complete) description of the 2d Boussinesq equations, with a non-exhaustive list of mathematical references. The inviscid Boussinesq equations are obtained through a linearization of the density-dependent incompressible Euler equations around special steady states with zero velocity  $(\bar{\rho}_{eq}(x_1), 0, 0, \bar p_{eq}(x_2))$ satisfying the \emph{hydrostatic balance} $\de_{x_2} \bar p_{eq}=-g \bar \rho_{eq}$. In many physical applications, for instance in oceanography, see \cite{Rieutord}, the background density is assumed to be continuous and strictly decreasing $\bar \rho_{eq}'(x_2)<0$: this is called \emph{stable stratification}, as it provides spectral stability of the aforementioned steady states, see \cite{Gallay}. Let us consider then the following perturbation expansions 
\begin{align*}
\rho(t,x_1, x_2)&=\bar{\rho}_{eq}(x_2)+\tilde \rho (t,x_1, x_2); \\
u(t,x_1, x_2)&=\tilde u(t,x_1, x_2);\; w(t,x_1, x_2)=\tilde w (t,x_1,x_2); \\
p(t,x_1,x_2)&=\bar{p}_{eq}(x_2)+\rho_0\tilde p (t,x_1, x_2),
\end{align*} of the hydrostatic equilibria  $(\bar{\rho}_{eq}(x_1), 0, 0, \bar p_{eq}(x_2)), $ where $\bar{\rho}_{eq}(x_2)=\rho_0+r(x_2)$ with $\rho_0>0$ being the characteristic (constant) density and $r(y)$ a function such that $r'(x_2)<0$. Applying the \emph{Boussinesq approximation}, i.e. relying on the assumption that $r(x_2) \ll \rho_0$ and neglecting the density variation $r(x_2)$ everywhere but in gravity terms, \cite{Rieutord}, one obtains system \eqref{eq:Boussinesq-nonrot} with $\nu=\kappa=0$,  after naming $b=g \tilde{\rho}/\rho_0$. In \eqref{eq:Boussinesq-nonrot}, the value $N^2=-g \frac{\bar \rho_{eq}'}{\bar \rho_{eq}}(x_2)$ is called \emph{buoyancy frequency} and it is in general a function of the vertical coordinate $x_2$, depending in fact on the background density profile $\bar \rho_{eq}(x_2)$. When $N$ varies with $x_2$, the linear Boussinesq system is associated with a nonlinear eigenvalue problem that is analyzed in \cite{lannes}. 
However, it is customary especially in oceanography, \cite{DY1999} to restrict ourselves to \emph{affine background stratifications}. In that case, under the Boussinesq approximation $r(x_2) \ll \rho_0$, the value $N$ is constant. We will work in this framework and assume that $N=1$. We finally remark that system \eqref{eq:Boussinesq-nonrot} with constant $N$ has been rigorously derived from the Navier-Stokes Fourier equations under the Oberbeck-Boussinesq approximation (see \cite{long1965}) in several mathematical results, relying on the method of Desjardins et al \cite{desj}: we refer to \cite{danchin} for a recent derivation in critical spaces and for further references therein.

\subsection*{Notation and convention}
We use the following notation and conventions. 
\begin{itemize}
\item We use the notation $a \approx b, a,b \in \R$ if there exist uniform constants $c>0, C>0$ such that $a\ge c b, \, a\le C b$.
\item The notation $a=O(b)$ is used when there exists a uniform constant $C>0$ such that $a \le C b$.
\item For any $f_\eps=f(\eps), g_\eps=g(\eps)=\eps^a\bar g $ with $a \in \R, \bar g \in \R$, we use the notation $f_\eps \sim g_\eps$ if $g_\eps$ is the leading order term of the expansion of $f_\eps$ in terms of $\eps$, i.e. $\lim_{\eps \rightarrow 0} \eps^{-a} f_\eps = \bar g$.
\item Given a function $f$, we denote by $\widehat{f}$ its Fourier transform.
\end{itemize}


\subsection*{Setting of the problem} 
We provide the linear boundary layer analysis of system \eqref{eq:system} with $N=1$, which we rewrite here
\begin{align}
\de_t u - b \sin \gamma + \de_x p &= \nu  \Delta u,\notag\\
\de_t w - b \cos \gamma + \de_y p &=  \nu \Delta w,\notag\\
\de_t b + u\sin \gamma  + w \cos \gamma & = \kappa  \Delta b,\notag\\
\de_x u + \de_y w&=0,
\label{eq:system-sec2}
\end{align}
in $\mathbb{R}^2_+$, with viscosity $\nu>0$ and diffusivity $\kappa>0$, and the following no-slip and no-flux boundary conditions
\begin{align}
u_{|y=0}=w_{|y=0}=\de_y b_{|y=0}=0.\label{eq:cond-BL-sec 2}
\end{align}
We rely on the scaling assumption below.
\begin{assumption}\label{ass}
\begin{itemize}
\item The criticality parameter $\zeta=\omega_{k,m}^2-\sin^2\gamma$ in \eqref{def:critical-relation} is assumed to have the following leading order $\zeta \sim \eps^2$ in terms of an arbitrarily small parameter $\eps>0$.
\item One of the following alternatives holds.
\begin{enumerate}
\item[(A)] The size of viscosity $\nu=\eps^6 \nu_0$ and the size of diffusivity $\kappa=\kappa_0 \eps^\beta$, for some universal constants $\nu_0>0, \kappa_0>0$, and $\beta>0$. 
\item[(B)] The size of diffusivity $\kappa=\eps^6 \kappa_0$ and the size of viscosity $\nu=\nu_0 \eps^\beta$, for some universal constants $\nu_0>0, \kappa_0>0$, and $\beta>0$. 
\item[(C)] The size of viscosity and diffusivity are asymptotically equivalent $\nu \sim \kappa \sim \eps^\beta$, with $\nu= \nu_0 \eps^\beta, \kappa=\kappa_0 \eps^\beta$  for some universal constants $\nu_0>0, \kappa_0>0$, and $\beta>6$. 
\end{enumerate}
\end{itemize}
\end{assumption}
In (C), the sizes of viscosity and diffusivity are not different, in fact they are asymptotically equivalent and strictly smaller than the scaling $\eps^6$ adopted and investigated in \cite{DY1999, BDSR19}. We will show in the following that this last case (C) is \emph{degenerate} and we cannot provide in this context a stable solution to the near-critical reflection problem. 
In order to simplify the analysis, we further distinguish five different (sub-)regimes, which are presented below. Let $\nu_0>0, \kappa_0$ be two universal parameters.
\begin{itemize}
    \item \underline{Case 1}: $\nu=\nu_{0}\eps^{6}$,$\kappa=\kappa_{0}\eps^{\beta}$ with $\beta<6$;
   
    \item \underline{Case 2}: $\nu=\nu_{0}\eps^{6}$,$\kappa=\kappa_{0}\eps^{\beta}$ with $\beta>6$;
    
    \item \underline{Case 3}: $\nu=\nu_{0}\eps^{\beta}$,$\kappa=\kappa_{0}\eps^{6}$ with $\beta<6$;
    
    \item \underline{Case 4}: $\nu=\nu_{0}\eps^{\beta}$,$\kappa=\kappa_{0}\eps^{6}$ with $\beta>6$;
    
    \item \underline{Case 5}: $\nu=\nu_{0}\eps^{\beta}$,$\kappa=\kappa_{0}\eps^{\beta}$ with $\beta>6$. \end{itemize}
\subsection*{Plan of the paper}
The paper is organized as follows. In Section \ref{sec1} we describe and prove the main result, which provides an approximate solution to the considered problem. Next, a detailed linear boundary layer analysis is provided in Section \ref{sec2}. Finally, a more accurate description of the approximate solution is given in the last section.
 
 \section{Main results}\label{sec1}
We state and prove our main result below.
\begin{theorem}\label{thm:main1}[Consistency \& stability]
Let $\W_{\rm{inc}}^{0}$ be an incident wave packet satisfying \eqref{def:inc} for some $\G \in C_0^\infty(\R)$.
Then, in all the considered regimes (Case 1,2,3,4,5), there exists an (almost) exact solution $\W^{\rm{app}}$ to system \eqref{eq:system}, of the form
$$\W^{\rm{app}}:=\W^0_{\rm{inc}}+\W^0_{\rm{BL}},$$
where  $\W_{\rm{BL}}^0$ is a boundary layer wave packet as in \eqref{def:BL-general}. 
More precisely, the following results hold true.
\begin{itemize}
\item[(i)] In all the considered regimes (Case 1,2,3,4,5),  $\W^{\rm{app}}$ is a \emph{consistent} approximation to system \eqref{eq:system}, in the sense that it satisfies that system 
\begin{align*}
\de_t u^{\rm{app}} - b^{\rm{app}} \sin \gamma + \de_x p^{\rm{app}} &=  \nu  \Delta u^{\rm{app}}  + R_u^{\rm{app}},\notag\\
\de_t w^{\rm{app}}  - b^{\rm{app}}  \cos \gamma + \de_y p^{\rm{app}}  &= \nu \Delta w^{\rm{app}}  + R_w^{\rm{app}} ,\notag\\
\de_t b^{\rm{app}}  + u^{\rm{app}} \sin \gamma  + w^{\rm{app}}  \cos \gamma & =  \kappa  \Delta b^{\rm{app}} + R_b^{\rm{app}} ,\notag\\
\de_x u^{\rm{app}}  + \de_y w^{\rm{app}} &=0,
\end{align*}
and the boundary conditions \eqref{eq:cond-BL}, with a remainder $$R_{\rm{app}}=(R_u^{\rm{app}} , R_w^{\rm{app}} , R_b^{\rm{app}})^T \quad \text{such that} \quad \|R_{\rm{app}}\|_{L^2(\R^2_+)}=O(\nu+\kappa).$$
\item[(ii)] In Case 1,2,3,4, $\W^{\rm{app}}$ is a \emph{stable} approximation to system \eqref{eq:system} in the following sense. Consider the unique global-in-time weak solution $\W_{\rm{weak}} (t)$ to system \eqref{eq:system} with boundary conditions \eqref{eq:cond-BL} in $C(\mathbb{R}^+; \mathbb{V}_\sigma')  \cap L^\infty(\mathbb{R}^+; L^2(\mathbb{R}^2_+)) \cap L^2_{\rm{loc}}(\mathbb{R}^+; \mathbb{V}_\sigma)$,  where $\mathbb{V}_\sigma:=\{ (u, w, b) \in H^1(\mathbb{R}^2_+); \; \de_x u+\de_y w=0\}$ and $ \mathbb{V}'_\sigma \rm{\  is\ the\ dual \ of \;} \mathbb{V}_\sigma$,  with initial data $\W_{\rm{weak}}|_{t=0}=\W^{\rm{app}}|_{t=0}$.
Then, the following estimate holds:
\begin{align}\label{eq:stabestimate}
\|(\W^{\rm{app}}-\W_{\rm{weak}})(t\bcb{,\cdot,\cdot})\|_{L^2(\mathbb{R}^2)} = O(\sqrt{(\nu+\kappa)t})e^{\max\{\nu, \kappa\} t}.
\end{align}
\end{itemize}
\end{theorem}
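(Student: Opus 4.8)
The plan is to prove consistency (i) and stability (ii) separately, exploiting the linearity of the system throughout.

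For part (i), I would begin by observing that the incident wave packet $\W^0_{\rm{inc}}$ is, by construction, an exact solution of the \emph{inviscid} system \eqref{eq:system} with $\nu=\kappa=0$ (this is what items (1)--(3) in the definition \eqref{def:inc} guarantee via the dispersion relation \eqref{eq:disprel}). Plugging $\W^{\rm{app}}=\W^0_{\rm{inc}}+\W^0_{\rm{BL}}$ into the viscous system, the residual therefore splits into two pieces: the viscous terms $\nu\Delta\W^0_{\rm{inc}}$, $\kappa\Delta b^0_{\rm{inc}}$ acting on the smooth (interior) incident wave, and whatever error the boundary layer wave packet $\W^0_{\rm{BL}}$ leaves behind. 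Since $\W^0_{\rm{inc}}$ is smooth with derivatives bounded uniformly in $\eps$ (being a fixed wave packet), the first piece is manifestly $O(\nu+\kappa)$ in $L^2(\R^2_+)$. The heart of (i) is then to show that $\W^0_{\rm{BL}}$ is constructed (in Section \ref{sec2}, which I may invoke) precisely so that it cancels the leading-order boundary trace of $\W^0_{\rm{inc}}$ to enforce \eqref{eq:cond-BL}, while itself generating only an $O(\nu+\kappa)$ interior residual. I would verify this case by case: in each of Cases 1--5 the boundary layer profiles solve the ODEs obtained by balancing $\de_t$ against the dominant part of the (rescaled) Laplacian in the stretched normal variable, so that the dropped subleading terms contribute an $L^2$ error of the stated order; the no-slip and no-flux conditions \eqref{eq:cond-BL} are satisfied exactly by design.

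For part (ii), I would exploit linearity decisively. Set $\W_{\rm{err}}:=\W^{\rm{app}}-\W_{\rm{weak}}$. Since both terms satisfy the same boundary conditions \eqref{eq:cond-BL} and the same divergence-free constraint, and since $\W_{\rm{weak}}$ solves \eqref{eq:system} exactly while $\W^{\rm{app}}$ solves it up to the remainder $R_{\rm{app}}$, subtracting the two systems shows that $\W_{\rm{err}}$ solves the \emph{same} linear viscous Boussinesq system \emph{forced by} $R_{\rm{app}}$, with zero initial data (recall $\W_{\rm{weak}}|_{t=0}=\W^{\rm{app}}|_{t=0}$) and homogeneous boundary conditions. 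The natural step is then a standard energy estimate: testing the $u,w$ equations against $(u_{\rm{err}},w_{\rm{err}})$ and the buoyancy equation against $b_{\rm{err}}$ (with an appropriate weight to symmetrize the buoyancy--velocity coupling, so that the skew-symmetric $\sin\gamma,\cos\gamma$ terms cancel in the energy), the pressure drops out by incompressibility, the dissipation terms $-\nu\|\nabla(u,w)_{\rm{err}}\|^2-\kappa\|\nabla b_{\rm{err}}\|^2$ have a good sign, and one is left with
\begin{align*}
\ddt \tfrac12\|\W_{\rm{err}}\|_{L^2}^2 \le \|R_{\rm{app}}\|_{L^2}\,\|\W_{\rm{err}}\|_{L^2}+C\max\{\nu,\kappa\}\|\W_{\rm{err}}\|_{L^2}^2,
\end{align*}
where the last term accounts for the lower-order (non-dissipative or sign-indefinite) contributions that cannot be absorbed. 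A Gronwall argument on $\|\W_{\rm{err}}\|_{L^2}$, using $\|R_{\rm{app}}\|_{L^2}=O(\nu+\kappa)$ from part (i) and the zero initial condition, integrates to $\|\W_{\rm{err}}(t)\|_{L^2}=O(\sqrt{(\nu+\kappa)t})\,e^{\max\{\nu,\kappa\}t}$, which is exactly \eqref{eq:stabestimate}; the $\sqrt{t}$ arises from integrating the constant-in-time forcing bound against the energy.

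The main obstacle I anticipate is making the energy estimate genuinely rigorous at the level of \emph{weak} solutions rather than for smooth solutions: the quantity $\ddt\|\W_{\rm{err}}\|_{L^2}^2$ is not classically defined, so the computation above must be justified in the Lions--Magenes sense using the duality pairing $\langle\,\cdot\,,\cdot\,\rangle_{\mathbb{V}'_\sigma,\mathbb{V}_\sigma}$ and the regularity $\W_{\rm{weak}}\in C(\R^+;\mathbb{V}'_\sigma)\cap L^\infty(\R^+;L^2)\cap L^2_{\rm{loc}}(\R^+;\mathbb{V}_\sigma)$ stated in the theorem. One must check that $\W_{\rm{err}}$ is an admissible test function (this is why the solution class was specified so carefully) and that the time-derivative term can be written as $\tfrac12\ddt\|\W_{\rm{err}}\|^2$ via the standard lemma on the derivative of the $L^2$ norm for functions in $L^2(\mathbb{V}_\sigma)$ with $\de_t$ in $L^2(\mathbb{V}'_\sigma)$. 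Once this functional-analytic framework is in place, the inequality and its Gronwall consequence are routine. A secondary subtlety is confirming that the buoyancy coupling can indeed be symmetrized so that $\max\{\nu,\kappa\}$ (rather than something worse) governs the exponential growth, but since $N=1$ and the coupling matrix is skew after the natural pairing, this should go through cleanly.
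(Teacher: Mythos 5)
Your proposal matches the paper's proof in both structure and substance: consistency is obtained by deferring to the boundary-layer construction of Proposition \ref{prop:BL}, with the remainder reducing to $\nu\Delta$ and $\kappa\Delta$ acting on the $O(1)$ incident packet, and stability follows from the same $L^2$ energy estimate on $\W^{\rm{app}}-\W_{\rm{weak}}$ in which the skew-symmetric coupling cancels, the dissipation is signed after integrating by parts using the exactly satisfied boundary conditions, and a weighted Young/Gr\"onwall step yields \eqref{eq:stabestimate}. The only cosmetic differences are that the paper justifies the energy identity for weak solutions by Friedrichs regularization and passage to the limit rather than the duality pairing you describe, and it splits $R_{\rm{app}}=R^{\rm{app}}_\nu+R^{\rm{app}}_\kappa$ with weights $\nu^{-1},\kappa^{-1}$ in Young's inequality to produce the $\sqrt{(\nu+\kappa)t}$ factor, which is exactly the mechanism you indicate.
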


We will rely on the following (classical) result to prove \emph{stability} of our approximate solution $\W^{\rm{app}}$.
\begin{theorem}[On the existence of weak solutions in the half plane]
Let $\W_{0}=(u_{0},v_{0},b_{0}) \in (L^2(\R^2_+))^2$ be a divergence-free initial datum. Then there exists a unique global-in-time weak solution 
\[\W_{\rm{weak}} \in C(\mathbb{R}^+; \mathbb{V}_\sigma')  \cap L^\infty(\mathbb{R}^+; L^2(\mathbb{R}^2_+)) \cap L^2_{\rm{loc}}(\mathbb{R}^+; \mathbb{V}_\sigma)\] 
to system \eqref{eq:system} with boundary condition \eqref{eq:cond-BL}, which satisfies the following energy identity for all $t\ge0$:
\begin{equation}
\frac{1}{2}\|\W_{\rm{weak}}(t)\|^{2}_{L^{2}}+ \int^{T}_{0} \nu (\|\nabla u(t)\|^{2}_{L^{2}}+\|\nabla w(t)\|^{2}_{L^{2}})+\kappa\|\nabla b(t)\|^{2}_{L^{2}}\, dt= \frac{1}{2}\|\mathcal{W}_{0}(0)\|^{2}_{L^{2}}.
\label{eq:energy}
\end{equation}
\end{theorem}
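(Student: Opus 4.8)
The plan is to establish existence by the Faedo--Galerkin method, uniqueness by linearity, and the energy identity \eqref{eq:energy} via a Lions--Magenes regularity argument. Since system \eqref{eq:system} is \emph{linear}, the passage to the limit is elementary and the only genuinely delicate points are the encoding of the boundary conditions in the functional framework and the upgrade from an energy \emph{inequality} to the energy \emph{identity}. First I would eliminate the pressure by working with the solenoidal test space $\mathbb{V}_\sigma$: testing the two momentum equations against a divergence-free pair $(\phi,\psi)$ with vanishing trace at $y=0$ annihilates the pressure gradient, since $\langle\nabla p,(\phi,\psi)\rangle=-\langle p,\de_x\phi+\de_y\psi\rangle=0$. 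Testing the buoyancy equation against a scalar $\chi\in H^1(\R^2_+)$, the no-flux condition $\de_y b|_{y=0}=0$ is recovered as the natural boundary condition through $\kappa\langle\Delta b,\chi\rangle=-\kappa\langle\nabla b,\nabla\chi\rangle$. This yields a weak formulation on $\mathbb{V}_\sigma$ in which one seeks $\W_{\rm{weak}}=(u,w,b)$ with, for every $\Phi=(\phi,\psi,\chi)\in\mathbb{V}_\sigma$,
\[
\frac{d}{dt}\langle\W_{\rm{weak}},\Phi\rangle+\nu\langle\nabla(u,w),\nabla(\phi,\psi)\rangle+\kappa\langle\nabla b,\nabla\chi\rangle+\mathcal{C}(\W_{\rm{weak}},\Phi)=0,
\]
where $\mathcal{C}$ collects the buoyancy--velocity coupling terms.

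Next I would fix a Galerkin basis of $\mathbb{V}_\sigma$ and solve the resulting finite-dimensional linear ODE system for the approximations $\W_n$; global solvability is immediate by linearity. The crucial a priori bound comes from testing with $\W_n$ itself. The key structural observation is that the coupling terms are \emph{antisymmetric}: the velocity equations contribute $-\langle b(\sin\gamma,\cos\gamma),(u,w)\rangle$ while the buoyancy equation contributes $+\langle u\sin\gamma+w\cos\gamma,b\rangle$, and these cancel exactly. Consequently
\[
\frac12\frac{d}{dt}\|\W_n\|_{L^2}^2+\nu\|\nabla(u_n,w_n)\|_{L^2}^2+\kappa\|\nabla b_n\|_{L^2}^2=0,
\]
so the energy is in fact nonincreasing. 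Integrating in time yields uniform bounds for $\W_n$ in $L^\infty(\R^+;L^2(\R^2_+))\cap L^2_{\rm{loc}}(\R^+;\mathbb{V}_\sigma)$, together with a bound on $\de_t\W_n$ in $L^2_{\rm{loc}}(\R^+;\mathbb{V}_\sigma')$ read off directly from the equation.

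Because the system is linear, no strong (Aubin--Lions) compactness is needed: the weak-$*$ limits extracted from the uniform bounds pass directly through every term of the weak formulation, producing a weak solution $\W_{\rm{weak}}$ in the stated class. The regularity $\W_{\rm{weak}}\in L^2_{\rm{loc}}(\mathbb{V}_\sigma)$ with $\de_t\W_{\rm{weak}}\in L^2_{\rm{loc}}(\mathbb{V}_\sigma')$ gives, by the Lions--Magenes lemma, that $t\mapsto\|\W_{\rm{weak}}(t)\|_{L^2}^2$ is absolutely continuous with $\frac{d}{dt}\|\W_{\rm{weak}}\|_{L^2}^2=2\langle\de_t\W_{\rm{weak}},\W_{\rm{weak}}\rangle$, and in particular $\W_{\rm{weak}}\in C(\R^+;L^2)\hookrightarrow C(\R^+;\mathbb{V}_\sigma')$. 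Uniqueness is then immediate: the difference of two solutions with the same data solves the homogeneous problem with zero initial datum, and the energy identity forces it to vanish.

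The step I expect to require the most care is the energy \emph{identity} \eqref{eq:energy} rather than a mere inequality. The Galerkin scheme only yields the inequality after passing to the weak limit, by lower semicontinuity of the norms; equality requires justifying that $\W_{\rm{weak}}$ may legitimately be used as a test function in its own weak formulation, which is precisely the content of the Lions--Magenes lemma invoked above and hinges on the pairing $\langle\de_t\W_{\rm{weak}},\W_{\rm{weak}}\rangle$ being well defined through the duality $\mathbb{V}_\sigma'$--$\mathbb{V}_\sigma$. A secondary technical point is the unboundedness of $\R^2_+$: the Galerkin basis and the density arguments must be adapted to a non-compact domain, although the linear dissipative structure prevents any loss and the argument proceeds as in the bounded case.
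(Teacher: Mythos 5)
Your proposal is correct, but the paper itself does not spell out a proof: it declares the result classical and points to the construction in Chemin--Gallagher--Grenier and the appendix of Bianchini--Dalibard--Saint-Raymond, which proceeds by applying the Spectral Theorem to the inverse of the Stokes operator on the half plane and building a sequence of regularized solutions $\W_k(t)$ that satisfy the energy identity exactly before passing to the limit. Your route is a different realization of the same circle of ideas: a Faedo--Galerkin scheme on an arbitrary countable basis of $\mathbb{V}_\sigma$, the antisymmetric cancellation of the buoyancy--velocity coupling (which is exactly the skew-symmetry of the operator $\mathcal{L}$ that the paper exploits later in the stability proof), weak-$*$ passage to the limit using linearity, and then the Lions--Magenes lemma to upgrade the energy inequality obtained by lower semicontinuity to the identity \eqref{eq:energy} and to get $C(\R^+;L^2)$ continuity. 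What your approach buys is that it sidesteps the spectral theory of the Stokes operator on the unbounded domain $\R^2_+$ (where the spectrum is continuous, so the regularization must be done through spectral projections rather than eigenfunction expansions); the price is the extra Lions--Magenes step, since the Galerkin limit alone only yields an inequality. Two small points deserve care: the bound on $\de_t\W_n$ in $L^2_{\rm{loc}}(\mathbb{V}_\sigma')$ requires the finite-dimensional projector to be bounded on $\mathbb{V}_\sigma$ (or, more simply, one reads $\de_t\W_{\rm{weak}}\in L^2_{\rm{loc}}(\mathbb{V}_\sigma')$ directly from the limit equation, which is all Lions--Magenes needs); and the no-slip condition on the velocity must be encoded as a trace condition in the definition of $\mathbb{V}_\sigma$ (i.e.\ $H^1_0$ for the velocity components), while the no-flux condition on $b$ is, as you say, natural. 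Neither affects the validity of the argument.
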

The existence of a unique global-in-time weak solution $\W_{\rm{weak}}$ is classical and it follows by applying the Spectral Theorem to the inverse of the Stokes operator acting on the half plane. It relies on the construction of a sequence of regularized solutions $\W_k(t)$, for which we can derive the energy identity. The proof is detailed in \cite{Chemin2006} and in the Appendix of \cite{BDSR19}. 

\begin{proof}[Proof of Theorem \ref{thm:main1}]
We prove (i) and (ii) respectively.
\begin{itemize}
\item[(i)] The solution $\W^{\rm{app}}$ is provided by Proposition \ref{prop:BL}, together with the estimate of $\|R_{\rm{app}}\|_{L^2}$. The \emph{consistency} part is therefore proved.\\
\item[(ii)] We point out that the strong formulation of the system, which is required in order to get the energy estimate leading to the stability inequality, is actually satisfied by a sequence approximate solutions, which are smooth by Friedrichs approximation. Then the energy inequality for the weak solution $\W_{\rm{weak}}$ is obtained by passing to the limit. With a slight abuse of notation, here we omit this step. We write the equation satisfied by $\W^{\rm{app}}-\W_{\rm{weak}}$ below:
\begin{align}\label{eq:BQ-remainder}
\partial_{t}(\W^{\rm{app}}-\W_{\rm{weak}})=-\mathbb{P}\mathcal{L}(\W^{\rm{app}}-\W_{\rm{weak}})+\mathbb{P}
\begin{pmatrix}
   \nu\Delta (u^{\rm{app}}-u_{\rm{weak}})\\ \nu\Delta (w^{\rm{app}}-w_{\rm{weak}}) \\ \kappa\Delta (b^{\rm{app}}-b_{\rm{weak}}) 
\end{pmatrix} + R^{\rm{app}},
\end{align}
where $\mathbb{P}$ is the Leray projector such that $\mathbb{P} \W_{\rm{weak}}= \W_{\rm{weak}}$ and  $\mathbb{P} \W^{\rm{app}}= \W^{\rm{app}}$, and  $\mathcal{L}$ is the following skew-symmetric operator:
\[
\mathcal{L}=
\begin{pmatrix}
   0 & 0 & -\sin\gamma \\
    0 & 0 & -\cos\gamma \\
    \sin\gamma & \cos\gamma & 0
\end{pmatrix}.
\]
After taking the scalar product against $\W^{\rm{app}}-\W_{\rm{weak}}$ in \eqref{eq:BQ-remainder}, we obtain 
\begin{align} \label{eq:scalarboussinesq}
&\frac{1}{2}\frac{d}{dt}\|\W^{\rm{app}}-\W_{\rm{weak}}\|_{L^{2}}^2+\langle\mathbb{P}\mathcal{L}(\W^{\rm{app}}-\W_{\rm{weak}}),\W^{\rm{app}}-\W_{\rm{weak}}\rangle\\ 
& \quad =\left\langle\begin{pmatrix}
   \nu\Delta (u^{app}-u_{\rm{weak}})\\ \nu\Delta (w^{app}-w_{\rm{weak}})\notag \\ \kappa\Delta (b^{app}-b_{\rm{weak}}) 
\end{pmatrix},\W^{\rm{app}}-\W_{\rm{weak}}\right\rangle+\langle R^{\rm{app}},\W^{\rm{app}}-\W_{\rm{weak}}\rangle.
\end{align}
Since $\mathbb{P}$ is symmetric while $\mathcal{L}$ is skew-symmetric, we have
\begin{align*}\langle\mathbb{P}\mathcal{L}(\W^{\rm{app}}-\W_{\rm{weak}}),\W^{\rm{app}}-\W_{\rm{weak}}\rangle&=\langle\mathcal{L}(\W^{\rm{app}}-\W_{\rm{weak}}),\mathbb{P}\W^{\rm{app}}-\W_{\rm{weak}}\rangle\\
&=\langle\mathcal{L}(\W^{\rm{app}}-\W_{\rm{weak}}),\W^{\rm{app}}-\W_{\rm{weak}}\rangle\\
&=0.
\end{align*}
Integrating by parts the terms involving the Laplace operator, and using the fact that the approximate solution exactly satisfies the boundary conditions, i.e. $\W^{\rm{app}}|_{y=0}=0$, we have
\begin{align*}
&-\nu\int_\RR \partial_y (u^{\rm{app}}-u_{\rm{weak}})(u^{\rm{app}}-u_{\rm{weak}})|_{y=0}+
\partial_y (w^{\rm{app}}-w_{\rm{weak}})(w^{\rm{app}}-w_{\rm{weak}})|_{y=0}\, dx\\
&\qquad -\kappa\int_\RR \partial_y (b^{\rm{app}}-b_{\rm{weak}})(b^{\rm{app}}-b_{\rm{weak}})|_{y=0} \, dx\\
&\qquad -\nu(\|\nabla(u^{\rm{app}}-u_{\rm{weak}})\|_{L^2}+\|\nabla(w^{\rm{app}}-w_{\rm{weak}})\|_{L^2})-\kappa\|\nabla(b^{\rm{app}}-b_{\rm{weak}}\|_{L^2}\\
&\quad =-\nu(\|\nabla(u^{\rm{app}}-u_{\rm{weak}})\|_{L^2}+\|\nabla(w^{\rm{app}}-w_{\rm{weak}})\|_{L^2})-\kappa\|\nabla(b^{\rm{app}}-b_{\rm{weak}}\|_{L^2}\le0.
\end{align*}

In the last term of \eqref{eq:scalarboussinesq}, we decompose $R_{\rm{app}} = R^{{\rm{app}}}_\nu + R^{{\rm{app}}}_\kappa$, where we know from Proposition \ref{prop:BL} that $\|R^{{\rm{app}}}_\nu\|_{L^2} = O (\nu)$ and $\|R^{{\rm{app}}}_\kappa\|_{L^2}=O(\kappa)$, and we obtain
\begin{align*}
|\langle R_{{\rm{app}}},\W^{\rm{app}}-\W_{\rm{weak}}(t)\rangle| \leq &\frac{1}{2\nu} \| R^{\rm{app}}_\nu \|_{L^{2}}^2 + \frac{1}{2\kappa} \| R^{{\rm{app}}}_\kappa \|_{L^{2}}^2+ \nu\| (u^{{\rm{app}}}-u)(t)\|^{2}_{L^{2}}\\
&\quad +\nu\| (w^{{\rm{app}}}-w)(t)\|^{2}_{L^{2}}+ \kappa  \|(b^{{\rm{app}}}-b)(t)\|^{2}_{L^{2}}.
\end{align*}
Finally, putting altogether inside \eqref{eq:scalarboussinesq}, we have
\begin{align}\notag
\frac{1}{2}\frac{d}{dt}\|\W^{\rm{app}}-\W_{\rm{weak}}\|_{L^{2}}^2 \le \frac{1}{2\nu} \| R^{{\rm{app}}}_\nu \|_{L^{2}}^2 + \frac{1}{2\kappa} \| R^{{\rm{app}}}_\kappa \|_{L^{2}}^2 + \max\{\nu, \kappa\} \|\W^{\rm{app}}-\W_{\rm{weak}}\|_{L^{2}}^2.
\end{align}

Since $(\W^{\rm{app}}-\W_{\rm{weak}})|_{t=0}=0,$ and, as remarked before, we have
$$\nu^{-1} \|R^{{\rm{app}}}_\nu\|^2_{L^2(\mathbb{R}_+^2)}+ \kappa^{-1} \|R^{{\rm{app}}}_\kappa\|^2_{L^2(\mathbb{R}_+^2)}  = O(\nu+\kappa),$$
integrating in time and using the Gr\"onwall estimate, we obtain 
\begin{align}\notag
\|(\W^{\rm{app}}-\W_{\rm{weak}})(t\bcb{,\cdot,\cdot})\|_{L^{2}} \le O(\sqrt{(\nu+\kappa) t}) e^{\max\{\nu, \kappa\} t}.
\end{align}  
The proof is concluded.
\end{itemize}
\end{proof}

\section{Linear boundary layer analysis}\label{sec2}
In this section, we construct the approximate solution $\W^{\rm{app}}$ to the near-critical reflection problem for \eqref{eq:system-sec2}, as announced in the statement of Theorem \ref{thm:main1}. The approximate solution $\W^{\rm{app}}$ is provided by the following result.

\begin{proposition}\label{prop:BL}[Critical reflection for an incident wave packet]
Let $\nu>0, \kappa>0$. 
There exists an approximate solution $(u, w, b)^T$ to system \eqref{eq:system} in the half space $\mathbb{R}^2_+$, which \textbf{exactly satisfies the boundary conditions} \eqref{eq:cond-BL}, and it is given by
\begin{align*}
\W^{\rm{app}}:=\W_{\rm{inc}}^0 + \Wbl^0 ,  
\end{align*}
where $\W_{\rm{inc}}^0=(u_{\rm{inc}}, w_{\rm{inc}}, b_{\rm{inc}})^T$ is the incident wave packet \eqref{def:inc}, while $\W_{\rm{BL}}^0$ is a boundary layer wave packet. 
In particular, the boundary layer wave packet $\Wbl^0$ has the following form in the different regimes.
\begin{itemize}
\item  In \underline{Case 1} ($\nu\sim\eps^{6},\kappa\sim\eps^{\beta}, \beta<6$):
$$\W^0_{\rm{BL}}:=\W^0_{\rm{BL}, \eps^{3}}+\W^0_{\rm{BL}, \eps^{\beta/3}},$$
where $\W^0_{\rm{BL}, \eps^{\beta/3}}$ is a boundary layer wave packet of decay $\eps^{-\beta/3}$ and amplitude $\eps^{-\beta/3}$, $\W^0_{\rm{BL}, \eps^{3}}$ is a boundary layer wave packet of decay $\eps^{-3}$ and amplitude $\eps^{\beta/3-3}$ for $\beta>9/2$ and $\eps^{-\beta/3}$ for $\beta<9/2$.
\item In \underline{Case 2} ($\nu\sim\eps^{6},\kappa\sim\eps^{\beta}, \beta>6$): 
$$\W^0_{\rm{BL}}:=\W^0_{\rm{BL}, \eps^{2}}+\W^0_{\rm{BL}, \eps^{\beta/2}},$$
where $\W^0_{\rm{BL}, \eps^{2}}$ is a boundary layer wave packet of decay $\eps^{-2}$ and amplitude $\eps^{-2}$, $\W^0_{\rm{BL}, \eps^{\beta/2}}$ is a boundary layer wave packet of decay $\eps^{-\beta/2}$ and amplitude $\eps^{3\beta/2-10}$.
\item In \underline{Case 3} ($\nu\sim\eps^{\beta},\kappa\sim\eps^{6}, \beta<6$): 
$$\W^0_{\rm{BL}}:=\W^0_{\rm{BL}, \eps^{\beta/3}}+\W^0_{\rm{BL}, \eps^{3}},$$
where $\W^0_{\rm{BL}, \eps^{\beta/3}}$ is a boundary layer wave packet of decay $\eps^{-\beta/3}$ and amplitude $\eps^{-\beta/3}$, $\W^0_{\rm{BL}, \eps^{3}}$ is a boundary layer wave packet of decay $\eps^{-3}$ and amplitude $\eps^{9-5\beta/3}$.
\item In \underline{Case 4} ($\nu\sim\eps^{\beta},\kappa\sim\eps^{6}, \beta>6$):
$$\W^0_{\rm{BL}}:=\W^0_{\rm{BL}, \eps^{2}}+\W^0_{\rm{BL}, \eps^{\beta/2}},$$
where $\W^0_{\rm{BL}, \eps^{2}}$ is a boundary layer wave packet of decay $\eps^{-2}$ and amplitude $\eps^{-2}$, $\W^0_{\rm{BL}, \eps^{\beta/2}}$ is a boundary layer wave packet of decay $\eps^{-\beta/2}$ and amplitude $\eps^{2-\beta/2}$ for $\beta<8$ and $\eps^{-2}$ for $\beta>8$.
\item In \underline{Case 5} ($\nu\sim\eps^{\beta},\kappa\sim\eps^{\beta}, \beta>6$):
$$\W^0_{\rm{BL}}:=\W^0_{\rm{BL}, \eps^{2}}+\W^0_{\rm{BL}, \eps^{\beta/2-1}}+\W^0_{\rm{BL}, \eps^{\beta/2}},$$
where $\W^0_{\rm{BL}, \eps^{-\beta+8}}$ is a boundary layer wave packet of decay $\eps^{\beta-8}$ and amplitude $\eps^{-2}$, $\W^0_{\rm{BL}, \eps^{\beta/2-1}}$ is a boundary layer wave packet of decay $\eps^{1-\beta/2}$ and amplitude $\eps^{-2}$ and $\W^0_{\rm{BL}, \eps^{\beta/2}}$ is a boundary layer wave packet of decay $\eps^{-\beta/2}$ and amplitude $\eps^{-1}$.
\end{itemize}

Moreover, the following results hold true.
\begin{itemize}
\item[(a)] The boundary layer wave packet $\Wbl^0=(u_{\rm{BL}}, w_{\rm{BL}}, b_{\rm{BL}})^T$ is an \textbf{exact solution} to system \eqref{eq:system-sec2} with boundary conditions 
\begin{align}\label{eq:BL-prop}
u_{\rm{BL}}|_{y=0}= - u_{\rm{inc}}|_{y=0}, \quad w_{\rm{BL}}|_{y=0}= - w_{\rm{inc}}|_{y=0}, \quad \de_y b_{\rm{BL}}|_{y=0}= - \de_y b_{\rm{inc}}|_{y=0}.
\end{align}
\item[(b)] The function $\W^{\rm{app}}$ is a \textbf{consistent} approximate solution in the sense of Theorem \ref{thm:main1}, with a remainder $R_{\rm{app}}=(R_u^{\rm{app}} , R_w^{\rm{app}} , R_b^{\rm{app}})^T$ such that $$\|R_{\rm{app}}\|_{L^2(\R^2_+)}\le \|R_u^{\rm{app}}\|_{L^2(\R^2_+)} + \|R_w^{\rm{app}}\|_{L^2(\R^2_+)} + \|R_b^{\rm{app}}\|_{L^2(\R^2_+)} = O(\nu+\kappa),$$ where $\|R_u^{\rm{app}}\|_{L^2(\R^2_+)}=O(\nu), \|R_w^{\rm{app}}\|_{L^2(\R^2_+)}=O(\nu), \|R_b^{\rm{app}}\|_{L^2(\R^2_+)}=O(\kappa)$. More precisely, in the different cases, we have the following:
\begin{table}[ H]
\begin{tabular}{|p{3.3cm} |p{1.4cm} |p{1.4cm} |p{1.4cm} |p{1.4cm} |p{1.4cm} |}
 \hline
 & Case 1 & Case 2 & Case 3 & Case 4 & Case 5  \\ [1.5ex] \hline
$\|R^{\rm{app}}\|_{L^{2}}$ &   $O(\eps^{\beta})$     &    $O(\eps^{6})$    &    $O(\eps^{\beta})$    &    $ O(\eps^{6})$ & $O(\eps^\beta)$  \\ [3ex] \hline
\end{tabular}
\end{table}
\end{itemize}
\end{proposition}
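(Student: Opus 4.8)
The plan is to build $\Wbl^0$ as a finite superposition of exponential boundary-layer modes that solve the full viscous--diffusive system \eqref{eq:system-sec2} exactly in the interior and whose traces at $y=0$ cancel those of the incident packet. First I would posit the boundary-layer ansatz in Fourier form, i.e. the general form \eqref{def:BL-general}: for each $(k,m)$ in the support of $\widehat A$ I would write $\Wbl^0$ as an integral against $\widehat A(k,m)$ of terms $\mathbf{B}_j(k,m)\,e^{-i\omega_{k,m}t+ikx}\,e^{-\lambda_j(k,m)\,y}$ with $\Re\lambda_j>0$, so that every mode decays into $y>0$. Substituting this into the Fourier transform (in $x$ and $t$) of \eqref{eq:system-sec2} turns the system into ODEs in $y$; eliminating the pressure through the divergence-free constraint reduces it to a scalar ODE whose characteristic polynomial in $\lambda$ encodes the admissible decay rates. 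Since the layers are thin, $\partial_y=-\lambda_j$ is large while $\partial_x=ik$ acts on the slowly varying envelope, so I would keep the dominant balance between the time-frequency and stratification terms, the criticality $\zeta\sim\eps^2$, and the second-order operators $\nu\partial_y^2$ and $\kappa\partial_y^2$, discarding the $\partial_x$-contributions at leading order.

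The core of the argument is the case-by-case solution of this characteristic equation. Depending on whether $\nu\gg\kappa$, $\nu\ll\kappa$, or $\nu\sim\kappa$, and on the position of $\beta$ relative to the thresholds $6$, $9/2$, $8$, the polynomial factorizes into distinct dominant balances, each factor producing one boundary-layer scale $\delta_j=\eps^{s_j}$ (decay $\eps^{-s_j}$). I would read off from each balance the widths announced in the statement ($\eps^2$, $\eps^3$, $\eps^{\beta/2}$, $\eps^{\beta/3}$, $\eps^{\beta/2-1}$, and so on), and then fix the vector amplitudes $\mathbf{B}_j$ by imposing the three boundary conditions of item (a): $u_{\rm{BL}}=-u_{\rm{inc}}$, $w_{\rm{BL}}=-w_{\rm{inc}}$, $\partial_y b_{\rm{BL}}=-\partial_y b_{\rm{inc}}$ at $y=0$. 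Using the explicit traces of the incident eigenvector $X_{k,m}$ from \eqref{eq:disprel}, this is a linear algebraic system for the $\mathbf{B}_j$ whose solution yields the amplitudes, and their $\beta$-transitions, stated in each case. Here I would check that the number of decaying modes matches the number of conditions so that the system is solvable; this is precisely where the degeneracy of Case 5 surfaces, one mode failing to be usable as a genuine decaying correction.

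Finally I would estimate the remainder. Since $\W_{\rm{inc}}^0$ solves the inviscid system exactly, feeding it into \eqref{eq:system-sec2} leaves only the viscous and diffusive terms, which are $O(\nu)$ in the two momentum equations and $O(\kappa)$ in the buoyancy equation; the boundary-layer packet solves the viscous--diffusive system up to the $\partial_x$-terms dropped in the dominant balance, and applying $\nu\partial_x^2$ (resp. $\kappa\partial_x^2$) to the large-amplitude steep profiles produces the remaining contributions to $R_\nu^{\rm{app}}$ and $R_\kappa^{\rm{app}}$. The mechanism of the $L^2$ bound is that a mode of amplitude $\eps^{-a}$ and width $\eps^{s}$ contributes $\eps^{-a}\eps^{s/2}$ in $L^2(\R^2_+)$, the factor $\eps^{s/2}$ arising from the $y$-integration of $e^{-2\Re\lambda\,y}$; combining this gain with the prefactors $\nu=\eps^6$ or $\kappa=\eps^\beta$ and the bounded wavenumber $k^2=O(1)$ yields the entries $O(\nu)$ and $O(\kappa)$, hence the table values $O(\eps^\beta)$ in Cases 1,3,5 and $O(\eps^6)$ in Cases 2,4, and the overall $\|R^{\rm{app}}\|_{L^2}=O(\nu+\kappa)$ (the larger of $\nu,\kappa$ dominating in each regime). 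The split $R^{\rm{app}}=R_\nu^{\rm{app}}+R_\kappa^{\rm{app}}$ with $\|R_\nu^{\rm{app}}\|_{L^2}=O(\nu)$ and $\|R_\kappa^{\rm{app}}\|_{L^2}=O(\kappa)$ is exactly what the stability step of Theorem \ref{thm:main1} consumes when dividing by $\nu$ and $\kappa$.

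I expect the main obstacle to be the bookkeeping of scales and amplitudes across the five regimes: correctly identifying the dominant balance of the characteristic polynomial in each case, tracking the threshold transitions (for instance $\beta$ below or above $9/2$ in Case 1 and below or above $8$ in Case 4) where the leading amplitude switches, and checking at each step that the amplitude growth $\eps^{-a}$ is beaten by the combination of layer thinness $\eps^{s/2}$ and the small prefactor, so that the remainder is genuinely of the claimed order rather than merely bounded.
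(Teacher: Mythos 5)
Your overall architecture matches the paper's: a Fourier/plane-wave ansatz $e^{-i\omega t+ikx-\lambda y}$, a characteristic polynomial in $\lambda$, case-by-case dominant balances to locate the three decaying roots, and a $3\times 3$ linear system from the traces at $y=0$ to fix the amplitudes. The gap is in how you treat the boundary-layer modes as solutions. You propose to construct them from a \emph{truncated} dominant balance, ``discarding the $\partial_x$-contributions at leading order,'' and then to charge the dropped terms to the remainder. The paper does something structurally different and stronger: the $\lambda_j$ are taken as exact roots of the full sextic $\mathcal{P}(\lambda)$ in \eqref{eq:pol} (the dominant balances only identify their leading orders and the signs of their real parts), and the vectors $X_{k,\lambda_j}$ in \eqref{eq:eigen-BLs} lie exactly in $\ker A_\eps$. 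Consequently $\Wbl^0$ is an \textbf{exact} solution of the viscous--diffusive system --- which is precisely item (a) of the statement --- and the entire remainder collapses to $R_{\rm{app}}=-(\nu\Delta u^0_{\rm{inc}},\nu\Delta w^0_{\rm{inc}},\kappa\Delta b^0_{\rm{inc}})^T$ as in \eqref{errore}, whose $L^2$ norm is $O(\nu+\kappa)$ immediately because $\|\W^0_{\rm{inc}}\|_{L^2}=O(1)$ and $(k,m)$ is bounded on $\mathrm{supp}\,\widehat A$. Your construction, as described, does not prove item (a) at all, and it leaves additional remainder terms supported inside the layers (large amplitude $\times$ steep profile $\times$ small prefactor) whose $O(\nu)+O(\kappa)$ size would have to be verified in each of the five regimes; this verification is also what the stability step of Theorem \ref{thm:main1} relies on when it divides $\|R^{\rm{app}}_\nu\|^2$ by $\nu$ and $\|R^{\rm{app}}_\kappa\|^2$ by $\kappa$.

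A second, smaller omission: in Cases 2 and 3 the leading balance for $\lambda_5$ is $i\omega-\kappa\lambda_5^2=0$ (resp.\ with $\nu$), which is exactly the denominator of $B_{\lambda_5}$ in \eqref{eq:eigen-BLs}. One must therefore push the expansion of $\lambda_5$ to the next order to see that $B_{\lambda_5}=O(\eps^{6-\beta})$ (resp.\ $O(\eps^{\beta-6})$); this blow-up of the eigenvector drives both the order of $\det M_\lambda$ in \eqref{eq:detMatrix} and the $\beta$-thresholds $9/2$ and $8$ for the amplitude $a_5$. Your proposal acknowledges the bookkeeping burden but does not anticipate this degeneracy, which is not optional: without it the announced amplitudes (e.g.\ $\eps^{3\beta/2-10}$ in Case 2) cannot be obtained. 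I recommend you keep your scaffolding but replace the truncated modes by exact roots and exact kernel vectors, and track the second-order expansion of $\lambda_5$ where the eigenvector denominator degenerates.
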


The first step to construct $\W^{\rm{app}}$ in the above statement is to determine the precise order of the boundary layers solving the linear system \eqref{eq:system}. We will rely on the following definition.
\begin{definition}\label{def:BL}
For any $\beta_j >0$, $\alpha_j \in \RR$, let us define a \emph{boundary layer wave packet} as follows:
\begin{align}\label{def:BL-general}
\W_{\rm{BL}, \eps^{\beta_j}}^{\alpha_j}:= \int_{\R^2} \widehat{A}(k,m) a_j X_{k, \L} e^{-i\omega t+ikx - \L y} \, dk \, dm, \quad \rm{Re}(\L)>0,
\end{align}
where
\begin{enumerate}
\item $\lambda = O(\eps^{-\beta_j})$; 
\item $a_j =O(\eps^{\alpha_j})$;
\item $\widehat{A}(k,m)$ is given in \eqref{def:psi}; 
\item the eigenvector \begin{align}
\R^3 \ni X_{k, \lambda}&=\begin{pmatrix}
U_\L\\
W_\L\\
B_\L\\
\end{pmatrix}, \quad \text{with \;} U_\L=U_\L(k), \, W_\L=W_\L(k), \,B_\L=B_\L(k);
\end{align}
\item the time frequency $\omega_{k,m}$ is given by \eqref{eq:disprel}.
\end{enumerate}
\end{definition}

Plugging the ansatz $\eqref{def:BL-general}$ inside the equations \eqref{eq:system}, one obtains the corresponding (algebraic) linear system

\begin{align}\label{eq:algebra}
A_\eps(\omega, \kappa, \nu, k, \L) \begin{pmatrix}
U_\L\\
W_\L\\
B_\L\\
P_\L
\end{pmatrix}=0, \quad \text{where \;} A_\eps(\omega, \kappa, \nu, k, \L) \in \, \text{space of matrices \;} \mathcal{M}^{4\times 4},
\end{align}
$a_j=a_j(k, \eps)$ is the amplitude of the boundary layer.
We look therefore for vectors $X_{k, \lambda} \in \ker (A_\eps(\omega, \kappa_0, \nu_0, k, \L))$, with the restriction that $\lambda=\lambda(k)$ is such that $\rm{Re}(\lambda)>0$, as \eqref{def:BL-general}. Such vectors $X_{k, \L}$ will be called hereafter \emph{eigenvectors associated to boundary layers}. In order that $\ker A_\eps(\omega, \kappa_0, \nu_0, k, \L) \neq \{0\}$, one asks $\det A_\eps(\omega, \kappa_0, \nu_0, k, \L)=0$. This amounts at finding the roots in $\L$ of the following 
characteristic polynomial associated with $A_\eps(\omega, \kappa_0, \nu_0, k, \L)$:
\begin{align}\label{eq:pol}
\mathcal{P}(\L)&:=-\kappa \nu \lambda^6 - (i\omega (\kappa+\nu)+3\nu \kappa k^2)\L^4  + (\zeta  +2i\omega (\kappa+\nu) k^2-3\nu\kappa k^4) \L^2\notag\\
&\quad  - 2i \L k \sin\gamma \cos \gamma + k^2 (\cos^2\gamma-\omega^2-i\omega (\kappa+\nu) k^2+\nu\kappa k^4). 
\end{align}
Furthermore, the eigenvector $X_{k, \L_j}$ related to the eigenvalue $\L_j$ (such that $\mathcal{P}(\L_j)=0$) has the following general form:
\begin{align}\label{eq:eigen-BLs}
X_{k, \L_j}&=\begin{pmatrix}
U_{\L_j}\\
W_{\L_j}\\
B_{\L_j}
\end{pmatrix}= \begin{pmatrix}
1 \\
 \frac{ik}{\L_j} \\
\frac{\sin \gamma + i k \L_{j}^{-1} \cos \gamma}{i\omega - \kappa (k^2-\L_j^2)} \\
\end{pmatrix},
\end{align}
while the pressure $P_{\L_j}=\frac{1}{ik} [i\omega + \nu(\L^2-k^2) + \sin\gamma \frac{\sin \gamma+ik\L_{j}^-1 \cos \gamma}{i\omega+\kappa (\L^2-k^2)}]$.

The rest of this section is devoted to the proof of Proposition \ref{prop:BL}.
\begin{proof}[Proof of Proposition \ref{prop:BL}]
Notice that since wave packets are simple superposition of plane waves, the following linear analysis can be performed indeed in terms of plane waves. First, we provide the asymptotics of the roots $\L$ of $\mathcal{P}(\L)$ in the different cases. Note that we need at least three roots $\L$ with $\text{Re}(\L)>0$ (three boundary layers) in order to lift the boundary conditions \eqref{eq:cond-BL}. Next, we determine the amplitudes $a_j$ by requiring that the sum of the boundary layers evaluated at $y=0$ balances the boundary contribution of the incident wave (crf. (a) in the statement of Proposition \ref{prop:BL}.)\\
In order to determine the leading order of the roots  $\L$ of $\mathcal{P}(\L)$, we apply the same method that is widely detailed in \cite{BDSR19} and we look for asymptotics of the type $\L\sim\bar{\L}(k)\eps^{q}$ where $\bar{\L}(k) \in \CC$ and $q \in \QQ$. To determine their asymptotic behavior, we need to find $q$ such that different monomials in the expression of $\mathcal{P}(\L)$ have the same order in terms of (powers of) $\eps$. This way, we find an equation in $\bar{\L}$ that we can solve.
First, it is easy to see that if there exists a constant $c>0$ such that $1/c\le|k|\le c, \;1/c\le|\cos^2-\omega^2|\le c $ in all the cases, then there is no any root with $q>0$ (i.e an infinitesimal root in $\eps$), while there is always a root of size 1. 
In particular, the root of size 1 is 
$$ \lambda_{1} \sim -i\frac{k(\cos^{2}\gamma-\omega^{2})}{2\sin\gamma\cos\gamma},$$
and this root corresponds to the incident wave packet.
Now we investigate the different cases in detail. Hereafter we drop the \emph{overline} in $\bar \L$ for lightening the notation.

\noindent \underline{Case 1} ($\nu\sim\eps^{6},\kappa\sim\eps^{\beta}, \beta<6$):
\begin{itemize}
    \item for $q=-\frac{\beta}{3}$ we have
    \[-i\omega\kappa\lambda^{3}-2ik\sin\gamma\cos\gamma=0.\]
Then there are two roots, denoted by $\lambda_{2},\lambda_{3}$, with positive real part, and another one, $\lambda_{4}$, with negative real part.
\item For $q=-3$, we have
\[-\nu\lambda^{2}-i\omega=0,
\]
then there is a root, $\lambda_{5}$, with positive real part and another one, $\lambda_{6}$, with negative real part.
\end{itemize}

\noindent \underline{Case 2} ($\nu\sim\eps^{6},\kappa\sim\eps^{\beta}, \beta>6$):
\begin{itemize}
\item For $q=-2$, we have
\[-i\omega\nu\lambda^{3}+\zeta\lambda-2ik\sin\gamma\cos\gamma=0.
\]
There are two roots with positive real part, $\lambda_{2},\lambda_{3}$, and one root, $\lambda_{4}$, with negative real part.
\item For $q=-\frac{\beta}{2}$ we have
\[-\kappa\lambda^{2}-i\omega=0,\]
there are two root $\lambda_{5},\lambda_{6}$, the first one with positive real part and the second one with negative real part.
\end{itemize}
\begin{remark}
The next two cases are very similar to the previous ones since  \eqref{eq:pol} is symmetric in $(\nu, \kappa)$.
\end{remark}

\noindent \underline{Case 3} ($\nu\sim\eps^{\beta},\kappa\sim\eps^{6}, \beta<6$):
\begin{itemize}
    \item for $q=-\frac{\beta}{3}$ we have
    \[-i\omega\nu\lambda^{3}-2ik\sin\gamma\cos\gamma=0.\]
There are two $\lambda$, denoted with $\lambda_{2},\lambda_{3}$, with positive real part, and another one, $\lambda_{4}$, with negative real part.
\item For $q=-3$ we have
\[-\kappa\lambda^{2}-i\omega=0,
\]
there is a root, $\lambda_{5}$, with positive real part and another one, $\lambda_{6}$, with negative real part.
\end{itemize}

\noindent \underline{Case 4} ($\nu\sim\eps^{\beta},\kappa\sim\eps^{6}, \beta>6$):
\begin{itemize}
\item For $q=-2$ we have
\[-i\omega\kappa\lambda^{3}+\zeta\lambda-2ik\sin\gamma\cos\gamma=0,
\]
there are two roots with positive real part, $\lambda_{2},\lambda_{3}$, and one, $\lambda_{4}$, with negative real part.
\item For $q=-\frac{\beta}{2}$ we have
\[-\nu\lambda^{2}-i\omega=0,\]
there are two roots $\lambda_{5},\lambda_{6}$, the first one with positive real part and the second one with negative real part.
\end{itemize}
\noindent \underline{Case 5} ($\nu\sim\kappa\sim\eps^{\beta}$, $\beta>6$):
\begin{itemize}
\item For $q=-2$ we have
\[\zeta\lambda-2ik\sin\gamma\cos\gamma=0.\]
There is one root $\L_{2}$ with positive real part. In fact, the first approximation of this root is purely imaginary, but a further expansion yields
$$ \L_{2}=\bar{\L}\eps^{-2}+\frac{i\omega(\kappa+\nu)\bar{\L}^{4} \eps^{-8}}{2ik\sin\gamma\cos\gamma+O(\eps^{\delta})}-\frac{k^{2}(\cos^{2}\gamma-\omega^{2})}{2ik\sin\gamma\cos\gamma+O(\eps^{\delta})}+O(\eps^{s}), \quad s,\delta>0.$$
Therefore for $\beta<8$ this root has positive real part of order $\eps^{\beta-8}$. Since for $\beta>8$ this root is infinitesimal with respect to $\eps$, the associated part of solution does not represent a boundary layer in the sense of Definition \ref{def:BL}: it will be called \emph{degenerate boundary layer} (in fact this root has strictly positive real part for $\beta \ge 8$, but its decay in $y$ is very slow, i.e. it is a positive - rather than a negative - power of $\eps$) .
\item For $q=-\frac{\beta-2}{2}$ we have 
\[-i\omega(\nu+\kappa)\lambda^{2}+\zeta=0,
\]
where regardless of $\zeta$ there are always two roots $\lambda_{3},\lambda_{4}$, one with positive real part, $\lambda_{3}$, and another one with negative real part $\lambda_{4}$.
\item For $q=-\frac{\beta}{2}$ we have
\[-\nu\kappa\lambda^{2}-i\omega(\nu+\kappa)=0.\]
where $\lambda_{5},\lambda_{6}$ are similar to the previous cases.
\end{itemize}

\begin{remark}
As remarked in \cite{BDSR19}, the number of the roots with positive real part is independent of $\zeta$. Therefore, in all the regimes we always have three roots with positive real part.
\end{remark}

\noindent We summarize in the next table what we have found so far:
\begin{table}[H]
\begin{tabular}{ |p{2.4cm} |p{2.4cm} |p{2.4cm} |p{2.4cm} |p{2.8cm}| }
\hline
Case 1 & Case 2 & Case 3 & Case 4 & Case 5 \\ [1.5ex] \hline
One BL $\nu^{\frac{1}{2}}$  & One BL $\kappa^{\frac{1}{2}}$ & One BL $\kappa^{\frac{1}{2}}$   & One BL $\nu^{\frac{1}{2}}$ & One BL $\zeta^{4}/\nu$   \\[3ex]
 One BL $\kappa^{\frac{1}{3}}$  & One BL $\nu^{\frac{1}{3}}$   & One BL $\nu^{\frac{1}{3}}$ & One BL $\kappa^{\frac{1}{3}}$   &  One  BL $\nu^{\frac{1}{2}}$\\ [3ex]
 &  &  &  & One BL $(\nu/\zeta)^{\frac{1}{2}}$  \\[3ex] \hline
\end{tabular}
\end{table}

\underline{We are ready to prove point (a) of Proposition \ref{prop:BL}.}
To this end, we have to ensure that all the boundary conditions can be lifted (i.e., to guarantee that there exist always three roots $\L_j$ of $\mathcal{P}(\L)$ with $\Re(\L_j)>0$) and to determine the amplitudes of the related boundary layer wave packets. To this end, we have to solve the following linear algebraic system for $(a_{2},a_3,a_5)$ (see the notation for $X_{k,m}$ in \eqref{eq:disprel})
\begin{equation}\label{eq:amplitudeSystem}
M_{\lambda}(k,\eps,\omega)\begin{pmatrix}
a_{2} \\
a_{3} \\
a_{5}
\end{pmatrix}=
\begin{pmatrix}
\mathfrak{u}\\
\mathfrak{w}\\
\mathfrak{b}
\end{pmatrix}, \quad \text{with} \quad 
\begin{pmatrix}
\mathfrak{u}\\
\mathfrak{w}\\
\mathfrak{b}
\end{pmatrix}=
\begin{pmatrix}
- \widehat{u}_{\rm{inc}}\\
- \widehat{w}_{\rm{inc}}\\
- im (\widehat{b}_{\rm{inc}})
\end{pmatrix},
\end{equation} 
where $(a_{2},a_3,a_5) \in \CC^3$ are the unknown amplitudes and
\begin{equation*}
    M_{\lambda}(k,\eps,\omega)=\begin{pmatrix}U_{\lambda_{2}} & U_{\lambda_{3}} & U_{\lambda_{5}} \\
  W_{\lambda_{2}} & W_{\lambda_{3}} & W_{\lambda_{5}}\\
 -\lambda_{2} B_{\lambda_{2}} & -\lambda_{3} B_{\lambda_{3}} & -\lambda_{5} B_{\lambda_{5}} \end{pmatrix}=\begin{pmatrix}1 & 1 & 1 \\
  \frac{ik}{\lambda_{2}} & \frac{ik}{\lambda_{3}} & \frac{ik}{\lambda_{5}}\\
 -\lambda_{2} B_{\lambda_{2}} & -\lambda_{3} B_{\lambda_{3}} & -\lambda_{5} B_{\lambda_{5}} \end{pmatrix},
\end{equation*}
with $(U_{\L_j}, W_{\L_j}, B_{\L_j})$ given by \eqref{eq:eigen-BLs}.
It turns out that the number of boundary conditions which can be lifted is exactly the dimension of the following vector space
\begin{align*}
\text{Vect}\{(U,W,-\lambda B) \in \mathbb{C}^{3} \; | \; \exists (P,\lambda) \in \mathbb{C}^{2} , \Re(\lambda)>0 \,\text{e} \, &P_{\nu,\kappa,\omega,k}(\lambda)=0 \\
&\text{s. t.} \;  (U,W,B,P) \in \text{ker}A_\eps(\omega, \kappa, \nu, k, \L)\}.
\end{align*}
We can check that this system always has non-trivial solutions (i.e. det$M_{\lambda}(k,\eps,\omega)\neq0$), so that we have
\begin{equation} \label{eq:detMatrix}
    \text{det}M_\L(k,\eps,\omega)=ik\left( B_{\lambda_{2}}\frac{\lambda_{2}}{\lambda_{3}}-B_{\lambda_{3}}\frac{\lambda_{3}}{\lambda_{2}}+B_{\lambda_{3}}\frac{\lambda_{3}}{\lambda_{5}}-B_{\lambda_{2}}\frac{\lambda_{2}}{\lambda_{5}}+B_{\lambda_{5}}\frac{\lambda_{5}}{\lambda_{2}}-B_{\lambda_{5}}\frac{\lambda_{5}}{\lambda_{3}}\right).
\end{equation}
Inverting the matrix $M_{\lambda}(k,\eps,\omega)$ and observing that $\mathfrak{u},\mathfrak{v},\mathfrak{w}$ are all of order $1$, the order of $a_j$ is given by the leading order of the corresponding row of $M_{\lambda}(k,\eps,\omega)^{-1}$. We provide more details below.
First, notice that from \eqref{eq:eigen-BLs} that for $j=2,3$ we have $B_{\L_j}=O(1)$, and (if det$M_{\lambda}(k,\eps,\omega)\neq0$)
\begin{equation*}
   M_{\lambda}(k,\eps,\omega)^{-1}=\frac{1}{\text{det}M_{\lambda}(k,\eps,\omega)}
    \begin{pmatrix}
       -ikB_{\lambda_{5}}\frac{\lambda_{5}}{\lambda_{3}}+ikB_{\lambda_{3}}\frac{\lambda_{3}}{\lambda_{5}} & B_{\lambda_{5}}\lambda_{5}-B_{\lambda_{3}}\lambda_{3} & ik(\frac{1}{\lambda_{5}}-\frac{1}{\lambda_{3}}) \\
ikB_{\lambda_{5}}\frac{\lambda_{5}}{\lambda_{2}}-ikB_{\lambda_{2}}\frac{\lambda_{2}}{\lambda_{5}} & -B_{\lambda_{5}}\lambda_{5}+B_{\lambda_{2}}\lambda_{2} & ik(\frac{1}{\lambda_{2}}-\frac{1}{\lambda_{5}})\\

-ikB_{\lambda_{2}}\frac{\lambda_{2}}{\lambda_{3}}+ikB_{\lambda_{3}}\frac{\lambda_{3}}{\lambda_{2}} & B_{\lambda_{3}}\lambda_{3}-B_{\lambda_{2}}\lambda_{2} & ik(\frac{1}{\lambda_{3}}-\frac{1}{\lambda_{2}}) \\
    \end{pmatrix}.
\end{equation*}

Now we need to distinguish the five cases. 

$\bullet$ \noindent\underline{Case 1} $(\nu\sim\eps^6,\kappa\sim\eps^\beta, \beta<6)$:
 recalling that $\L_5=O(\eps^{-3})$ we have
\[ B_{\lambda_{5}}=\frac{\sin \gamma + i k \L_{5}^{-1} \cos \gamma}{i\omega - \kappa (k^2-\L_5^2)}=\frac{O(1)+O(\eps^{3})}{O(1)+O(\eps^{\beta})+O(\eps^{\beta-6})}=O(\eps^{6-\beta}).\]
Now the leading order term of \eqref{eq:detMatrix} is $B_{\L_5}\L_5\left(\frac{1}{\L_2}-\frac{1}{\L_3}\right)$ for $\beta>9/2$ and $B_{\lambda_{2}}\frac{\lambda_{2}}{\lambda_{3}}-B_{\lambda_{3}}\frac{\lambda_{3}}{\lambda_{2}}$ for $\beta<9/2$. Since $\L_2 \neq \L_3, \, B_{\L_2}\L_2^2\neq B_{\L_3}\L_3^2$ we have
\begin{equation*}
\text{det}M_{\lambda}(k,\eps,\omega)=    
    \begin{cases}
    O(\eps^{3-\frac{2\beta}{3}}) \qquad & \beta>\frac{9}{2} \\
O(1) \qquad & \beta<\frac{9}{2}
    \end{cases}.
\end{equation*}
For the amplitudes $a_j$ we have
\begin{equation*}
a_{2}=O(\eps^{-\frac{\beta}{3}}),\quad  a_{3}=O(\eps^{-\frac{\beta}{3}}), \quad a_{5} = \begin{cases}
O(\eps^{-\frac{\beta}{3}}) \qquad & \beta<\frac{9}{2} \\
O(\eps^{\frac{\beta}{3}-3}) \qquad & \beta>\frac{9}{2}
\end{cases}.    
\end{equation*}

$\bullet$ \noindent\underline{Case 2} $(\nu\sim\eps^6,\kappa\sim\eps^\beta, \beta>6)$:
since the leading order equation satisfied by $\lambda_5$ is $i\omega-\kappa\L_{5}^{2}= 0$, which is precisely one of the terms of the denominator of $B_{\L_5}$ (see \eqref{eq:eigen-BLs}), then we need to extract information on the next order term of the expansion of $\L_5$. We obtain
\[
\L_5=\tilde{\L}_5 \eps^{-\beta/2}+O(\eps^{\beta/2-6}),
\]
so that
\[ B_{\lambda_{5}}=\frac{O(1)+O(\eps^{\beta/2})}{O(\eps^{\beta})+O(\eps^{2\beta-12})+O(\eps^{\beta-6})}=O(\eps^{6-\beta}).\]
Now the leading order term of \eqref{eq:detMatrix} is $B_{\L_5}\L_5\left(\frac{1}{\L_2}-\frac{1}{\L_3}\right)$. Since $\L_2 \neq \L_3$ for all $\eps>0$, we have
\begin{equation*}
\text{det}M_{\lambda}(k,\eps,\omega)=O(\eps^{8-3\beta/2}).
\end{equation*}
For the amplitudes, we have
\begin{equation*}
a_{2}=O(\eps^{-2}),\quad  a_{3}=O(\eps^{-2}), \quad a_{5}=O(\eps^{\frac{3\beta}{2}-10}).
\end{equation*}

$\bullet$ \noindent\underline{Case 3} $(\nu\sim\eps^\beta,\kappa\sim\eps^6, \beta<6)$:
similarly to \underline{Case 2}, we have to push the expansion of the root $\L_5$ to the next order. This yields
\[
\L_5=\tilde{\L}_5 \eps^{-3}+O(\eps^{3-\beta}),
\]
then
\[ B_{\lambda_{5}}=\frac{O(1)+O(\eps^{3})}{O(\eps^{6-\beta})+O(\eps^{12-2\beta})+O(\eps^{6})}=O(\eps^{\beta-6}).\]
The leading order term of \eqref{eq:detMatrix} is $B_{\L_5}\L_5\left(\frac{1}{\L_2}-\frac{1}{\L_3}\right)$. Since $\L_2 \neq \L_3$, we have
\begin{equation*}
\text{det}M_{\lambda}(k,\eps,\omega)=O(\eps^{4\beta/3-9}).
\end{equation*}
For the amplitudes, we have
\begin{equation*}
a_{2}=O(\eps^{-\frac{\beta}{3}}), \quad a_{3}=O(\eps^{-\frac{\beta}{3}}), \quad a_{5}=O(\eps^{9-\frac{5\beta}{3}}).
\end{equation*}
$\bullet$ \noindent\underline{Case 4} $(\nu\sim\eps^6,\kappa\sim\eps^\beta, \beta<6)$:
recalling that $\L_5=O(\eps^{-\beta/2})$, we have
\[ B_{\lambda_{5}}=\frac{O(1)+O(\eps^{\beta/2})}{O(1)+O(\eps^{6})+O(\eps^{6-\beta})}=O(\eps^{\beta-6}).\]
This case is similar to Case 1, indeed the leading order terms of \eqref{eq:detMatrix} is $B_{\L_5}\L_5\left(\frac{1}{\L_2}-\frac{1}{\L_3}\right)$ for $\beta<8$ and $B_{\lambda_{2}}\frac{\lambda_{2}}{\lambda_{3}}-B_{\lambda_{3}}\frac{\lambda_{3}}{\lambda_{2}}$ for $\beta>8$. Since $\L_2 \neq \L_3, \, B_{\L_2}\L_2^2\neq B_{\L_3}\L_3^2,$ we have
\begin{equation*}
\text{det}M_{\lambda}(k,\eps,\omega)=    
    \begin{cases}
    O(\eps^{\beta/2-4}) \qquad & \beta<8 \\
O(1) \qquad & \beta>8
    \end{cases}.
\end{equation*}
For the amplitudes, we have
\begin{equation*}
a_{2}= O(\eps^{-2}), \quad a_{3}= O(\eps^{-2}), \quad a_{5} \sim \begin{cases}
O(\eps^{2-\frac{\beta}{2}}) \qquad & \beta<8 \\
O(\eps^{-2}) \qquad & \beta>8
\end{cases}.    
\end{equation*}

$\bullet$ \noindent\underline{Case 5} $(\nu\sim\eps^\beta,\kappa\sim\eps^\beta, \beta>6)$: recalling that $\L_5=O(\eps^{-\beta/2})$, we have
\[
B_{\L_5}=\frac{O(1)}{O(1)+O(\eps^{\beta})}=O(1).
\]
The leading order term of \eqref{eq:detMatrix} is $B_{\L_5}\L_5\left(\frac{1}{\L_2}-\frac{1}{\L_3}\right)$. Since $\L_2 \neq \L_3$, we have
\begin{equation*}
\text{det}M_{\lambda}(k,\eps,\omega)=O(\eps^{2-\beta/2}).
\end{equation*}
For the amplitudes, we have
\begin{equation*}
a_{2}=O(\eps^{-2}),\quad  a_{3}=O(\eps^{-2}), \quad a_{5}=O(\eps^{-1}).
\end{equation*}
We constructed our boundary layer wave packet $\Wbl^0$ in all the five cases, so that we have our approximate solution $\W^{\rm{app}}=\W^0_{\rm{inc}}+\Wbl^0$ to \eqref{eq:system-sec2}, exactly satisfying the boundary conditions \eqref{eq:cond-BL-sec 2}.
It remains to prove consistency of the approximate solution and to provide the size of $R_{\rm{app}}$. We have
\begin{align}\label{errore}
R_{\rm{app}}= \begin{pmatrix}
R_u^{\rm{app}}\\
R_w^{\rm{app}}\\
R_b^{\rm{app}}
\end{pmatrix}
:= -\begin{pmatrix}
   \nu\Delta u^{0}_{\text{inc}}\\\nu\Delta w^{0}_{\text{inc}}\\\kappa\Delta b^{0}_{\text{inc}}
   \end{pmatrix}
= \int_{\mathbb{R}^2} \hat{A}(k,m)  \begin{pmatrix}
   \nu (k^2+m^2) \\
   \nu (k^2+m^2)\\
   \kappa (k^2+m^2) 
   \end{pmatrix} \times X_{k,m}
   e^{ikx+imy-i\omega t} \, dk \, dm,
\end{align}
and its size (i.e $L^{2}$ norm) is $O(\nu+\kappa)$ as $\|\W^0_{\rm{inc}}\|_{L^2(\R_+^2)}=O(1)$ proved in [Lemma 2.7, \cite{BDSR19}].
\end{proof}

\begin{remark}
It is interesting to notice that $B_{\L_5}$ has always the same order of the Prandtl number $Pr=\nu/\kappa$. Moreover, there seems to be a relation between $Pr$ and the amplitude $a_5$: the smaller $Pr\ll1$ (i.e $\nu\ll\kappa$) is and the smaller its corresponding boundary layer wave packet is as well.
\end{remark}


\section{Size of the approximate solution}
We want to ensure that we are working with solutions with finite $L^2(\RR_+^2),L^\infty(\RR_+^2)$ norm.
First, we recall once again that exactly as in [Lemma 2.7, \cite{BDSR19}], one has
\begin{align}\label{eq:norminc}
\left \|  \W^0_{\rm{inc}} \right\|_{L^{2}}= \left \|  \W^0_{\rm{inc}} \right\|_{L^{\infty}} = O(1).
\end{align}
In the next lemma we provide a systematic recipe to automatically determine the $L^2$ and $L^\infty$ norms of the boundary layer part in
terms of the orders of the boundary layer decay ($\eps^{\beta_j}$) and of their amplitude ($\eps^{\alpha_j}$).

\begin{lemma} \label{lem:size}
Let $\W_{\rm{BL}, \eps^{\beta_j}}^{\alpha_j}$ be a boundary layer wave packet as in Definition \ref{def:BL-general}.
If, for all $(k,m)\in \text{supp} \hat{A}$, there exists a universal constant $C>0$ such that
\begin{equation}
    |a_{j}|\leq C \varepsilon^{\alpha_{j}}, \qquad \|X_{\lambda_{_{j}}}\|\leq C, \qquad \Re(\lambda_{j})\geq \frac{C}{\varepsilon^{\beta_{j}}},
\label{hp:lemma}
\end{equation}
then there exists a universal constant $K>0$ such that for the boundary layer wave packet the following inequality hold true:
\begin{align*}
 \|  \W^0_{\rm{BL},\eps^{\beta_j}} \|_{L^{\infty}(\mathbb{R}_{+}^{2})} &  \leq K\varepsilon^{\alpha_{j}+2}, \quad
  \| \W^0_{\rm{BL},\eps^{\beta_j}} \|_{L^{2}(\mathbb{R}_{+}^{2})}   \leq K\varepsilon^{\frac{2+\beta_{j}+2\alpha_{j}}{2}},
\end{align*}
for all $t \in \RR_{+}$.
\end{lemma}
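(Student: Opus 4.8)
The plan is to estimate both norms by the direct method: pull the moduli inside the Fourier integral \eqref{def:BL-general} and exploit two facts. First, the uniform decay $|e^{-\L y}|=e^{-\Re(\L)y}\le 1$ for $y\ge 0$, together with the hypothesis $\Re(\L_j)\ge C\eps^{-\beta_j}$ from \eqref{hp:lemma}. Second, the concentration of $\widehat A$ from \eqref{def:psi} near the four points $(\pm k_0,\pm m_0)$, where $|\widehat A|=O(\eps^{-2})$ on a support of measure $O(\eps^4)$. The only bookkeeping needed is to track the three independent sources of $\eps$-powers: the amplitude $a_j=O(\eps^{\alpha_j})$, the $L^1$-mass and support of $\widehat A$, and the decay rate $\Re(\L_j)$. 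Since the oscillatory factors $e^{-i\omega t}$ and $e^{ikx}$ have modulus one, the time $t$ drops out, which is precisely why the bounds are uniform in $t\in\RR_+$.

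For the $L^\infty$ bound I would write, for $y\ge 0$,
\[
|\W_{\rm{BL}}(t,x,y)|\le \int_{\R^2}|\widehat A(k,m)|\,|a_j|\,\|X_{k,\L}\|\,e^{-\Re(\L)y}\,dk\,dm
\le C\eps^{\alpha_j}\,\|\widehat A\|_{L^1(\R^2)},
\]
using \eqref{hp:lemma} and $e^{-\Re(\L)y}\le 1$. The substitutions $k\mapsto \eps^{-2}(k\pm k_0)$ and $m\mapsto\eps^{-2}(m\pm m_0)$ in \eqref{def:psi} give $\|\widehat A\|_{L^1(\R^2)}=\eps^{-2}\cdot(\eps^2\|\G\|_{L^1})^2=O(\eps^2)$, whence $\|\W_{\rm{BL}}\|_{L^\infty}\le K\eps^{\alpha_j+2}$.

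For the $L^2$ bound the key observation is that $x$ enters \eqref{def:BL-general} only through $e^{ikx}$, so Plancherel in $x$ turns the $x$-integral into an $L^2$-integral in the frequency $k$, leaving the $m$-integral as an amplitude. Setting $G(k,y):=\int_{\R}\widehat A(k,m)\,a_j\,X_{k,\L}\,e^{-i\omega t-\L y}\,dm$, one gets (up to a harmless factor $2\pi$ absorbed into $K$)
\[
\|\W_{\rm{BL}}\|_{L^2(\R^2_+)}^2=2\pi\int_0^\infty\!\!\int_{\R}|G(k,y)|^2\,dk\,dy.
\]
From \eqref{hp:lemma} and $|e^{-\L y}|\le e^{-C\eps^{-\beta_j}y}$ I would bound $|G(k,y)|\le C\eps^{\alpha_j}e^{-C\eps^{-\beta_j}y}\phi(k)$ with $\phi(k):=\int_{\R}|\widehat A(k,m)|\,dm$, where $\phi(k)=O(1)$ is supported on a $k$-set of measure $O(\eps^2)$, so $\|\phi\|_{L^2_k}^2=O(\eps^2)$. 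Performing the $y$-integral, $\int_0^\infty e^{-2C\eps^{-\beta_j}y}\,dy=O(\eps^{\beta_j})$, and multiplying the three contributions yields
\[
\|\W_{\rm{BL}}\|_{L^2(\R^2_+)}^2\le C'\,\eps^{2\alpha_j}\cdot\eps^{\beta_j}\cdot\eps^2=C'\,\eps^{2+\beta_j+2\alpha_j},
\]
which is the claimed estimate after taking the square root.

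I do not expect a genuine obstacle: the statement is a careful accounting exercise rather than a hard estimate. The one point that must be handled cleanly is the interplay between the $\eps^{-2}$ prefactor of $\widehat A$ and the $\eps^2$-thin support in each frequency direction. In the $L^\infty$ estimate both thin directions are consumed, producing $\|\widehat A\|_{L^1}=O(\eps^2)$; in the $L^2$ estimate the $m$-direction is instead absorbed into making $\phi(k)=O(1)$, while the $k$-direction contributes separately through $\|\phi\|_{L^2_k}^2=O(\eps^2)$ and the decay rate supplies the factor $\eps^{\beta_j}$. Making these two accountings consistent, and observing that the lower bound $\Re(\L_j)\ge C\eps^{-\beta_j}$ is applied uniformly on $\mathrm{supp}\,\widehat A$ (so that any residual $m$-dependence of $\L$ is harmless), is essentially the entire content of the proof.
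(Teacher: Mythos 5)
Your argument is correct and follows essentially the same route as the paper's proof: the $L^\infty$ bound via $|a_j|\,\|\widehat A\|_{L^1(\R^2)}=O(\eps^{\alpha_j+2})$, and the $L^2$ bound via Plancherel in $x$, bounding the inner $m$-integral by $O(\eps^{\alpha_j})e^{-c\eps^{-\beta_j}y}$ times a function of $k$ whose squared $L^2_k$ mass is $O(\eps^2)$, and then integrating the exponential in $y$ to collect the factor $\eps^{\beta_j}$. The only difference is cosmetic bookkeeping (your $\phi(k)$ versus the paper's explicit substitution in $\chi$), so no further comment is needed.
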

\begin{proof}
As $\|\hat{A}\|_{L^{1}(\mathbb{R}^{2})} \leq C \eps^{2}$, for the $L^{\infty}$ norm it is enough to observe that for all $t \in \RR_{+}$
\[
 \|  \W^0_{\rm{BL},\eps^{\beta_j}} \|_{L^{\infty}(\mathbb{R}_{+}^{2})}  \leq c|a_{j}|\|\hat{A}\|_{L^{1}(\mathbb{R}^{2})}\leq K\varepsilon^{\alpha_{j}+2}.
\]
The $L^2$ norm is bit trickier. First of all, using Fubini's theorem, we rewrite
\[
\W^0_{\rm{BL},\eps^{\beta_j}}=-\int_{\mathbb{R}}e^{ikx} \left(\int_{\mathbb{R}}a_{j}\hat{A}(k,m)X_{\lambda_{j}}e^{-i\omega t-\lambda_{j}y}dm\right)dk.
\]
Then, by Plancherel's theorem, we have
\[
\|\W^0_{\rm{BL},\eps^{\beta_j}}\|_{L^{2}_{x}}^{2}=\int_{\mathbb{R}}\left|\int_{\mathbb{R}}a_{j}\hat{A}(k,m)X_{\lambda_{j}}e^{-i\omega t-\lambda_{j}y}dm\right|^{2}dk.
\]
Now, noticing that $\| \chi\|^{2}_{L^{1}}=O(\varepsilon^{4}), \; \| \chi^{2}\|_{L^{1}}=O(\varepsilon^{2})$ and using the hypotheses, we obtain the following inequalities 
\[
\|\W^0_{\rm{BL},\eps^{\beta_j}}\|_{L^{2}_{x}}^{2}\leq C \varepsilon^{2\alpha_{j}-4}\int_{\mathbb{R}}\left|\int_{\mathbb{R}}\sum_{\pm} \chi\left(\frac{k\pm k_{0}}{\varepsilon^{2}}\right) \chi\left(\frac{m\pm m_{0}}{\varepsilon^{2}}\right)\text{exp}(-\frac{cy}{\varepsilon^{\beta_{j}}})dm\right|^{2}dk
\]
\[
\leq C \varepsilon^{2\alpha_{j}}\text{exp}(-\frac{cy}{\varepsilon^{\beta_{j}}})\int_{\mathbb{R}}\sum_{\pm} \chi^{2}\left(\frac{k\pm k_{0}}{\varepsilon^{2}}\right)dk \leq C \varepsilon^{2\alpha_{j}+2}\text{exp}(-\frac{cy}{\varepsilon^{\beta_{j}}}).\]
Integrating the latter in y, the proof is concluded.
\end{proof}

We can now directly apply the above lemma to estimate the sizes of the boundary layer wave packets of Proposition \ref{prop:BL}, in all the different cases.

\begin{proposition}\label{prop:sizes}
Consider the approximate solution $\W^{\rm{app}}=\W^0_{\rm{inc}}+\Wbl^0$ to system \eqref{eq:system-sec2} with boundary conditions \eqref{eq:cond-BL-sec 2}, provided by Proposition \ref{prop:BL}. Then the $L^2$ and $L^\infty$ sizes of $\W^{\rm{app}}$ in the five different regimes are given by the following table.
\begin{table}[ H]
\begin{tabular}{|p{2cm} |p{1.6cm} |p{1.6cm} |p{1.6cm} |p{1.6cm}|p{1.6cm}|}
 \hline
 & Case 1 & Case 2 & Case 3 & Case 4 & Case 5\\ [2ex] \hline
$\|\W^{\rm{app}}\|_{L^\infty}$ &   $O(\eps^{2-\beta/3})$     &    $O(1)$    &    $O(\eps^{2-\beta/3})$    &    $O(1)$  & $O(1)$    \\ [2ex] \hline
$\|\W^{\rm{app}}\|_{L^{2}}$ &    $O(1)$     &     $O(1)$     &     $O(1)$     & $O(1)$ & $O(1)$  \\ [2ex]
\hline
\end{tabular}
\end{table}
More in detail, the sizes of the boundary layer wave packets in the five different regimes are summarized below.
\begin{center}
\begin{table}[H] \label{table:size-packet}
\begin{tabular}{|cc|cc|cc|cc|cc|}
\hline
\multicolumn{2}{|c|}{\multirow{2}{*}{}} &
  \multicolumn{2}{c|}{Case 1} &
  \multicolumn{2}{c|}{Case 2} &
  \multicolumn{2}{c|}{Case 3} &
  \multicolumn{2}{c|}{Case 4} \\ [2ex]\cline{3-10} 
\multicolumn{2}{|c|}{} &
  \multicolumn{1}{c|}{$\beta<\frac{9}{2}$} &
  $\beta>\frac{9}{2}$ &
  \multicolumn{2}{c|}{$\beta>6$} &
  \multicolumn{2}{c|}{$\beta<6$} &
  \multicolumn{1}{c|}{$\beta<8$} &
  $\beta>8$\\ [2ex]\hline
\multicolumn{1}{|c|}{\multirow{2}{*}{$\|\W^0_{\rm{BL},\eps^{\beta_j}}\|_{L^{\infty}}$}} & j=2,3 & \multicolumn{2}{c|}{$O(\eps^{2-\beta/3})$}    & \multicolumn{2}{c|}{$O(1)$} & \multicolumn{2}{c|}{$O(\eps^{2-\frac \beta 3})$} & \multicolumn{2}{c|}{$O(1)$}\\ [2ex]\cline{2-10} 
\multicolumn{1}{|c|}{}                  & j=5   & \multicolumn{1}{c|}{$O(\eps^{2-\frac \beta 3})$} & $O(\eps^{\frac \beta 3-1})$ & \multicolumn{2}{c|}{O($\eps^{\frac \beta 2-2}$)} & \multicolumn{2}{c|}{{O($\eps^{11-5\frac \beta 3}$)}} & \multicolumn{1}{c|}{O($\eps^{4-\frac \beta 2}$)} & $O(1)$ \\ [2ex]\hline
\multicolumn{1}{|c|}{\multirow{2}{*}{$\|\W^0_{\rm{BL},\eps^{\beta_j}}\|_{L^{2}}$}} & j=2,3 & \multicolumn{2}{c|}{$O(\eps^{1-\frac \beta 6})$}    & \multicolumn{2}{c|}{$O(1)$} & \multicolumn{2}{l|}{$O(\eps^{1-\frac \beta 6})$} & \multicolumn{2}{c|}{$O(1)$} \\ [2ex]\cline{2-10} 
\multicolumn{1}{|c|}{}                  & j=5   & \multicolumn{1}{c|}{$O(\eps^{\frac 5 2-\frac \beta 3})$} & $O(\eps^{\frac \beta 3-\frac 1 2})$ & \multicolumn{2}{c|}{O($\eps^{3\frac \beta 4-3}$)} & \multicolumn{2}{c|}{$O(\eps^{\frac{23}{2}-5\frac \beta 3})$} & \multicolumn{1}{c|}{$O(\eps^{3-\frac \beta 4})$} & $O(\eps^{\frac \beta 4-1})$ \\ [2ex]\hline
\end{tabular}
\end{table}
\end{center}
\end{proposition}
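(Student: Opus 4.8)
The plan is to deduce the entire Proposition from Lemma~\ref{lem:size}, fed with the decay rates and amplitudes produced in Proposition~\ref{prop:BL}. Since $\W^{\rm{app}}=\W^0_{\rm{inc}}+\W^0_{\rm{BL}}$ with $\W^0_{\rm{BL}}$ a finite sum of boundary layer wave packets $\W^0_{\rm{BL},\eps^{\beta_j}}$, the triangle inequality reduces the matter to estimating each summand separately and then comparing with the incident part, for which \eqref{eq:norminc} already gives $\|\W^0_{\rm{inc}}\|_{L^2}=\|\W^0_{\rm{inc}}\|_{L^\infty}=O(1)$. For every packet I would read off from Proposition~\ref{prop:BL} the decay exponent $\beta_j$ (so that $\lambda_j\sim\bar\lambda_j\,\eps^{-\beta_j}$) and the amplitude exponent $\alpha_j$ (so that $a_j=O(\eps^{\alpha_j})$), and insert these two numbers into the two formulas of Lemma~\ref{lem:size}: the exponent $\alpha_j+2$ for the $L^\infty$ size and $\tfrac12(2+\beta_j+2\alpha_j)$ for the $L^2$ size. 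Carrying out this bookkeeping case by case, with the subcases $\beta<9/2$, $\beta>9/2$ in Case~1 and $\beta<8$, $\beta>8$ in Case~4 inherited from the amplitude dichotomy of Proposition~\ref{prop:BL}, reproduces exactly the entries of the detailed table.

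Before invoking the Lemma I would verify its three hypotheses \eqref{hp:lemma} uniformly for $(k,m)\in\operatorname{supp}\widehat A$. The amplitude bound $|a_j|\le C\eps^{\alpha_j}$ is precisely the output of the amplitude computation in Proposition~\ref{prop:BL}. The decay bound $\Re(\lambda_j)\ge C\eps^{-\beta_j}$ follows from the leading-order equations satisfied by the roots (such as $-\kappa\lambda^2-i\omega=0$ or the cubics carrying the $\zeta\lambda$ term): each of them has roots with strictly nonzero real part, and since the frequencies remain in a fixed compact set away from the origin on $\operatorname{supp}\widehat A$ ($1/c\le|k|\le c$, $1/c\le|\cos^2\gamma-\omega^2|\le c$), the constant $C$ may be chosen uniform.

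The genuinely delicate hypothesis is the eigenvector bound $\|X_{\lambda_j}\|\le C$, and this is where I expect the main obstacle. By \eqref{eq:eigen-BLs} the buoyancy entry $B_{\lambda_5}$ is of the order of the Prandtl number $Pr=\nu/\kappa$, because the root $\lambda_5$ sits exactly at the cancellation of the denominator $i\omega-\kappa(k^2-\lambda_5^2)$; hence whenever $Pr\gg1$ the eigenvector is \emph{not} uniformly bounded and Lemma~\ref{lem:size} does not apply verbatim. To bypass this I would renormalize the packet: writing $X_{\lambda_5}=\|X_{\lambda_5}\|\,\widetilde X_{\lambda_5}$ with $\|\widetilde X_{\lambda_5}\|\le C$ and $|B_{\lambda_5}|\sim\eps^{\delta_5}$, I absorb the growth into an effective amplitude $\widetilde a_5=a_5\|X_{\lambda_5}\|$ of exponent $\widetilde\alpha_5=\alpha_5+\delta_5$, and then apply the Lemma to the renormalized packet. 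The bookkeeping must track which of the components $u$, $w$, $b$ realizes the supremum, and this is the source of the $Pr$-dependent shifts in the $j=5$ rows of the table.

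Finally I would assemble the pieces. For the $L^2$ size one checks, case by case, that every exponent $\tfrac12(2+\beta_j+2\alpha_j)$ produced by the Lemma is nonnegative, so each boundary layer contributes $O(1)$ and the total is governed by the incident wave, giving $\|\W^{\rm{app}}\|_{L^2}=O(1)$ in all regimes. For the $L^\infty$ size one takes the maximum of the incident $O(1)$ term and the boundary-layer exponents $\alpha_j+2$, which yields the summary table (in particular the $\eps^{2-\beta/3}$ boundary-layer scaling of Cases~1 and~3 and the $O(1)$ entries of Cases~2,4,5). The one point left outside the scope of Lemma~\ref{lem:size} is the \emph{degenerate} packet of Case~5 (the root $\lambda_2$ for $\beta>8$), whose real part is a \emph{positive} power of $\eps$, so that \eqref{hp:lemma} fails; this contribution I would bound directly, consistently with the fact that Case~5 appears only in the summary table.
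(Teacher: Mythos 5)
Your proposal follows essentially the same route as the paper: the table is obtained by feeding the decay exponents $\beta_j$ and amplitude exponents $\alpha_j$ from Proposition~\ref{prop:BL} into Lemma~\ref{lem:size}, and you isolate the one caveat the paper itself flags, namely that the eigenvector bound $\|X_{\lambda_5}\|\le C$ in \eqref{hp:lemma} fails in Cases~2 and~3 (where $|X_{\lambda_5}|$ is of the order of the Prandtl number, $O(\eps^{6-\beta})$ and $O(\eps^{\beta-6})$ respectively), so that the lemma's proof must be rerun tracking the unbounded eigenvector --- your renormalization $\widetilde a_5=a_5\|X_{\lambda_5}\|$ is precisely that step. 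Your separate direct treatment of the degenerate $\lambda_2$ packet in Case~5 likewise matches the paper's remark on that case.
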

\begin{proof}
The proof follows by applying Lemma \ref{lem:size}. Actually, in \underline{Case 2} and in \underline{Case 3}, the eigenvector $X_{\L_5}$ does not satisfy all the assumptions of Lemma \ref{hp:lemma}. In this two cases, $|X_{\L_5}|$ has order  $O(\eps^{6-\beta})$ and $O(\eps^{\beta-6})$ respectively. However, one can follow the simple proof of the above lemma line by line, including the (unbounded) $|X_{\L_5}|$, to give an estimate of the $L^2$ and $L^\infty$ norms of $\W^0_{\L_5}$ in these cases as well.
\end{proof}

\begin{remark}[On the degenerate Case 5]
In \underline{Case 5}, the boundary layer wave packet associated to the root with small decay (i.e $\L_2$) does not have a finite $L^2$ norm. We have to discard this part of the solution and as a consequence, in Case 5 we cannot lift three boundary conditions. Therefore the solutions of Case 5 is only consistent, while we cannot prove stability (cfr. the statement of Theorem \ref{thm:main1}). In Case 5 ($\nu \sim \kappa \sim \eps^\beta, \beta>6$), where we only have two boundary layers $\W^0_{\rm{BL}, \eps^{\beta/2-1}}, \W^0_{\rm{BL}, \eps^{\beta/2}}$ lifting two (out of three) boundary conditions, the situation is the following:
\begin{table}[H]
\begin{tabular}{|c|c|c|}
\hline
 & $\W^0_{\rm{BL}, \eps^{\beta/2-1}}$ & $\W^0_{\rm{BL}, \eps^{\beta/2}}$ \\[2ex]\hline
$\|\cdot \|_{L^\infty}$ & $O(1)$ & $O(\eps)$ \\ [2ex]\hline
$\|\cdot \|_{L^2}$ & $O(\eps^{(\beta-6)/4})$ & $O(\eps^{(\beta/4})$ \\ [2ex]\hline
\end{tabular}
\end{table}
\end{remark}

\begin{remark}[On the $L^2$ norm of boundary layers]
Notice from the last two summarizing tables that in Case 2, 4, 5, where $\beta>6$ is allowed to be very big (and the related dissipation $\eps^\beta$ very weak), the $L^2$ norm of the boundary layers is respectively $O(\eps^{3\frac \beta 4-3}), O(\eps^{\frac \beta 4-1})$ (if $\beta>8$) and $O(\eps^\frac \beta 4)$). The $L^2$ sizes of these boundary layer wave packets is therefore decreasing as $\beta$ grows. At a first glance, this behavior could seem counterintuitive as one expects a decreasing dissipation to have a bad impact on the size of the approximate solution. However, it can be verified that the larger $\beta$ is and the larger the $H^1$ (and $H^s, s>0$) norm of these boundary layers are as well: in fact, since we are working with boundary layers close to $y=0$, the degeneracy due to the weaker dissipation is highlighted by derivatives in $y$, as the boundary layers' steepness is worse when $\beta$ is big (and the related dissipation is small).
\end{remark}

\subsection*{Acknowledgment}
RB is partially supported by the GNAMPA group of the INdAM. Part of this work was developed during the master thesis of GO.

\end{document}